\documentclass[11pt,a4paper]{article}

\usepackage{authblk}
\usepackage{caption2}%---change the caption of figures
\usepackage{CJK}
\usepackage{tabls}
\usepackage{bm}
\usepackage{multirow}
\usepackage{amssymb}
\usepackage{amsfonts}
\usepackage{mathrsfs}
\usepackage{empheq}
\usepackage{amsmath}
\usepackage{amsthm}
\usepackage{graphicx}
\usepackage{color}
\usepackage{indentfirst}
\usepackage{float}
\usepackage{cases}

\usepackage{setspace}
\theoremstyle{definition}
\newtheorem{definition}{Definition}[section]
\newtheorem{assumption}{Assumption}[section]

\numberwithin{equation}{section}
\theoremstyle{plain}
\newtheorem{theorem}{Theorem}[section]
\newtheorem{proposition}[theorem]{Proposition}
\newtheorem{lemma}[theorem]{Lemma}
\newtheorem{corollary}[theorem]{Corollary}

\theoremstyle{remark}
\newtheorem{remark}[theorem]{Remark}

\newcommand{\R}{\mathbb{R}}
\newcommand{\N}{\mathbb{N}}

\newcommand{\la}{\lambda}
\newcommand{\vp}{\varphi}

\newcommand{\angles}[1]{\langle #1 \rangle}

\date{\empty}

\begin{document}

\title{Wave front set of solutions to the fractional Schr\"{o}dinger equation}
\author[1]{Takumi Kanai\thanks{Email: 1124506@ed.tus.ac.jp}}
\author[2]{Ryo Muramatsu\thanks{Email: rmuramatsu@rs.tus.ac.jp}}
\author[2]{Yuusuke Sugiyama\thanks{Email: sugiyama.y@rs.tus.ac.jp}}

\affil[1]{Department of Mathematics, Graduate School of Science, Tokyo University of Science}
\affil[2]{Department of Mathematics, Faculty of Science, Tokyo University of Science}

\setlength{\baselineskip}{4.8mm}

\maketitle

\begin{abstract}
	In this paper,  we characterize the wave front sets of solutions to fractional Schr\"{o}dinger equations \(i\partial_{t}u =(-\Delta)^{\theta/2}u + V(x)u\)
	with $0<\theta <2$ via the wave packet transform (short-time Fourier transform). We clarify the relationship between the order \(\theta\) of the fractional Laplacian and the growth rate of the potential in the problem of propagation of singularities.
In particular, we present a theorem that bridges the propagation mechanisms of singularities for the Schr\"odinger and wave equations.

\end{abstract}

\section{Introduction}
In this article, we consider the initial value problem for the Schr\"{o}dinger equation
with the fractional Laplacian $(-\Delta)^{\theta/2}\ (0<\theta<2)$ and a potential $V(x)$,
\begin{equation}
  \begin{cases}
    i\partial_{t}u =(-\Delta)^{\theta/2}u + V(x)u,
      & (t,x)\in\mathbb{R}\times\mathbb{R}^{n}, \\
    u(0,x) = u_{0}(x),
      & x\in\mathbb{R}^{n}.
  \end{cases}
  \label{eq:main}
\end{equation}
Here $i=\sqrt{-1}$, $u:\mathbb{R}\times\mathbb{R}^{n}\to\mathbb{C}$, and
the fractional Laplacian $(-\Delta)^{\theta/2}$ is formally defined by
\[
  (-\Delta)^{\theta/2} f(x)
  = \mathcal{F}^{-1}\!\left(|\xi|^{\theta}\mathcal{F}[f](\xi)\right)(x),
\]
where $\mathcal{F}$ is the Fourier transform.
Moreover, the potential $V(x)$ is assumed to be a real-valued function.
The fractional Schr\"odinger equation arises as an extension of quantum mechanics when the Brownian paths in Feynman path integrals are replaced by L\'evy flights, yielding a fractional kinetic operator $(-\Delta)^{\theta/2}$ (see Laskin \cite{Laskin2002FractionalSE}). 
Moreover, Longhi \cite{Longhi2015FractionalSEOptics} proposed an optical realization in which the transverse light evolution in suitably designed optical cavities follows an effective fractional Schr\"odinger equation with a potential.

In recent years, fractional Schr\"odinger equations have begun to be studied extensively from various mathematical perspectives, including scattering theory, stationary problems, nonlinear dynamics, and Strichartz-type estimates (see, for instance, \cite{ZhangHuangZheng2021,FrankLenzmann2013,ChoHajaiejHwangOzawa2013,KohSeo2017,Gomez-Castro+Vazquez2019,AlphonseTzvetkov2025Circle}).
In this paper, we study the propagation of singularities of solutions in terms of the wave front set.
The wave front set is introduced  by H\"ormander to describe the precise information (position and direction) of the singularity of solutions to PDEs. In particular, H\"ormander has proved that 
 the singularities of solutions to hyperbolic PDEs propagate along null bi-characteristic curves (e.g. H\"ormander \cite{HormanderI, HormanderIII, Hormander1971FIO}).
Hyperbolic PDEs possess the so-called finite propagation property.
Consequently, the wave front set of their solutions can be characterized independently of the growth rate of lower-order terms, such as potential terms and the spatial decay rate of initial data.
When $\theta = 1$, the principal symbol of the equation in \eqref{eq:main}
coincides with that of the wave equation.
When $\theta=2$, the equation in \eqref{eq:main} is the Schr\"{o}dinger equation.
The propagation of singularities for solutions to the Schr\"odinger equation has been extensively investigated in the literature (e.g. Lascar \cite{Lascar1977}, Sakurai \cite{Sakurai1982QHWF}, Hassell and Wunsch \cite{HassellWunsch2005} and  Nakamura \cite{Nakamura2009WFSS, Nakamura2009}). 
Unlike hyperbolic equations, the Schr\"odinger evolution enjoys a dispersive smoothing effect, in the sense that the spatial decay of the initial data can yield improved regularity of the solution. 
As a result, the propagation of singularities behaves differently from the hyperbolic case.
For the Schr\"odinger setting, in Yajima \cite{Yajima1996}, it has been shown that the regularity of the solution (or  the fundamental solution) can change drastically depending on whether the potential is sub-quadratic or super-quadratic at infinity (see also Kato, Nakahashi and Tadano \cite{KatoNakahashiTadano2024}). 
The growth assumption on the potential imposed below is designed to bridge the hyperbolic and dispersive regimes.
Very recently,  Zhu \cite{Zhu2024GravityCapillary} established a characterization theorem for the wave front set of solutions to the fractional Schrödinger equation in the case without a potential term. The proof follows Nakamura’s approach, and Zhu further applied the theorem to the analysis of singularities for gravity–capillary water waves.

\begin{assumption}
	\label{ass:potential}
	The potential $V(x)$ is a real-valued function in $C^{\infty}(\mathbb{R}^{n})$.
	Furthermore, in the case $1 < \theta < 2$, there exists a constant $\nu < \frac{\theta}{\theta-1}$ such that for any multi-index $\alpha \in \mathbb{Z}_{\ge 0}^{n}$, there exists a constant $C_{\alpha} > 0$ satisfying
	\begin{equation*}
		|\partial_{x}^{\alpha}V(x)| \le C_{\alpha}\langle x\rangle^{\nu-|\alpha|}
	\end{equation*}
	for all $x \in \mathbb{R}^{n}$.
\end{assumption}

\if0
Before presenting the main theorem, we introduce the key concepts used in its statement: the wave front set and conic neighborhoods.
According to Hörmander \cite{Hormander}, the smoothness of a function is closely related to the decay rate of its Fourier transform at infinity. Specifically, a function $u \in \mathcal{S}'(\mathbb{R}^n)$ belongs to the Sobolev space $H^s(\mathbb{R}^n)$ if and only if
\begin{equation*}
	\int_{\mathbb{R}^n} (1+|\xi|^2)^s |\hat{u}(\xi)|^2 d\xi < \infty
\end{equation*}
holds. This implies that $\hat{u}(\xi)$ decays at the rate of $|\xi|^{-s}$ at infinity. However, such a concept of global smoothness cannot capture the local singularities of a function. Therefore, Hörmander introduced the concept of the wave front set. The definition is as follows.
\fi

\begin{definition}[Wave front set] \label{def:wave_front_set}
	For $f \in \mathcal{S}'(\mathbb{R}^n)$, we say that $(x_0, \xi_0) \in \mathbb{R}^n \times (\mathbb{R}^n \setminus \{0\})$ is not contained in the wave front set $WF(f)$ of $f$ (denoted by $(x_0, \xi_0) \notin WF(f)$) if there exists a function $\chi \in C_0^\infty(\mathbb{R}^n)$ with $\chi(x_0) \neq 0$ and a conic neighborhood $\Gamma$ of $\xi_0$ such that for any $N \in \mathbb{N}$, there exists a constant $C_N > 0$ satisfying
	\begin{equation*}
		|\mathcal{F}[\chi f](\xi)| \le C_N(1+|\xi|)^{-N} \quad (\text{for all } \xi \in \Gamma).
	\end{equation*}
	Here, $\Gamma$ is called a conic neighborhood of $\xi_0$ if $\Gamma$ is an open neighborhood of $\xi_0$ in $\mathbb{R}^n \setminus \{0\}$ and $\alpha \xi \in \Gamma$ holds for any $\xi \in \Gamma$ and $\alpha > 0$.
\end{definition}
Our main result describes the singularities of solutions to the fractional Schr\"odinger equation in terms of the framework introduced by Kato, Kobayashi and Ito \cite{KatoKobayashiIto2017}.
To this end, we employ the wave packet transform originally introduced by C\'ordoba and Fefferman \cite{Cordoba1978}.

\begin{definition}[Wave Packet Transform] \label{def:wave_packet_transform}
	For $\varphi \in \mathcal{S}(\mathbb{R}^n) \setminus \{0\}$ (basic wave packet) and $f \in \mathcal{S}'(\mathbb{R}^n)$, we define the wave packet transform $W_\varphi[f](x, \xi)$ by
	\begin{equation*}
		W_\varphi[f](x, \xi) = \int_{\mathbb{R}^n} \overline{\varphi(y-x)} f(y) e^{-iy\cdot\xi} dy.
	\end{equation*}
\end{definition}

The function $\varphi$ appearing in the wave packet transform is referred to as a wave packet or a window function. 
Moreover, for scaled wave packets, we often use the following notation for a parameter $b$ satisfying $0 < b < 1$ and $\lambda \ge 1$:
	\begin{equation*}
		\varphi_\lambda(x) = \lambda^{nb/2} \varphi(\lambda^b x).
	\end{equation*}
The equivalence of the following definitions of the wave front set is established by Kato, Kobayashi, and Ito, and it will be used in the proof of the main theorem.
\begin{proposition}[Kato, Kobayashi and Ito \cite{KatoKobayashiIto2017}] \label{prop:wfs_characterization}
	Let $(x_0, \xi_0) \in \mathbb{R}^n \times (\mathbb{R}^n \setminus \{0\})$ and $f \in \mathcal{S}'(\mathbb{R}^n)$. Fix $0 < b < 1$. Then, the following conditions are equivalent:
	\begin{itemize}
		\item[(i)] $(x_0, \xi_0) \notin WF(f)$
		\item[(ii)] There exists a conic neighborhood $V$ of $(x_0, \xi_0)$ such that for any $N \in \mathbb{N}$, any $a \ge 1$, and any $\varphi \in \mathcal{S}(\mathbb{R}^n) \setminus \{0\}$, there exists a constant $C_{N,a,\varphi} > 0$ satisfying
		      \begin{equation*}
			      |W_{\varphi_\lambda}[f](x, \lambda\xi)| \le C_{N,a,\varphi} \lambda^{-N}
		      \end{equation*}
		      for all $(x, \xi) \in V$ with $\lambda \ge 1$ and $a^{-1} \le |\xi| \le a$.
	\end{itemize}
\end{proposition}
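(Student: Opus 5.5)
We prove the two implications separately, using the formula
\[
W_{\varphi_\lambda}[f](x,\lambda\xi)=\mathcal{F}\bigl[\overline{\varphi_\lambda(\cdot-x)}f\bigr](\lambda\xi),
\]
where $\overline{\varphi_\lambda(\cdot-x)}$ is a window of width $\lambda^{-b}$ centred at $x$. Since $0<b<1$, this window is much wider than the wavelength $\lambda^{-1}$ and much narrower than the unit scale. Throughout, ``conic neighborhood of $(x_0,\xi_0)$'' means a set $\{|x-x_0|<\delta\}\times\Gamma$ with $\Gamma$ a conic neighborhood of $\xi_0$.

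\textbf{(i)$\Rightarrow$(ii).} Take $\chi$ and $\Gamma$ as in Definition \ref{def:wave_front_set}. Shrinking the support, we may assume $\chi=1$ on $B(x_0,2\delta)$. We split
\[
f=\chi f+(1-\chi)f .
\]

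For the first term, $W_{\varphi_\lambda}[\chi f](x,\lambda\xi)=(2\pi)^{-n}\bigl(\widehat{\chi f}*\widehat{\overline{\varphi_\lambda(\cdot-x)}}\bigr)(\lambda\xi)$. The second factor has modulus $\lambda^{-nb/2}|\hat{\bar\varphi}(\lambda^{-b}\eta)|$, so it is concentrated in $|\eta|\lesssim\lambda^{b}\ll\lambda$. Split the convolution integral at $|\eta|\le\epsilon\lambda$.
\begin{itemize}
\item For $|\eta|\le\epsilon\lambda$ and $\xi$ in a smaller cone $\Gamma'\Subset\Gamma$ with $a^{-1}\le|\xi|\le a$, the point $\lambda\xi-\eta$ lies in $\Gamma$ and has size $\sim\lambda$. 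The decay of $\widehat{\chi f}$ there gives $O(\lambda^{-N})$.
\item For $|\eta|>\epsilon\lambda$, we use the polynomial bound $|\widehat{\chi f}(\zeta)|\lesssim\langle\zeta\rangle^{M}$, valid because $\chi f\in\mathcal{E}'$. Combined with the Schwartz decay of $\hat\varphi(\lambda^{-b}\eta)$, which is $O(\lambda^{-(1-b)K})$ for every $K$, this also gives $O(\lambda^{-N})$.
\end{itemize}

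For the second term, $(1-\chi)$ vanishes on $B(x_0,2\delta)$, while $|x-x_0|<\delta$. So $\overline{\varphi_\lambda(y-x)}(1-\chi(y))$ is supported where $|y-x|\ge\delta$. There $\varphi_\lambda$ and all its derivatives are $O(\lambda^{-K})$ uniformly, with weights $\langle y\rangle^{-K}$. Pairing with $f\in\mathcal{S}'$, which has finite order, and integrating by parts using $e^{-iy\cdot\lambda\xi}=(i\lambda|\xi|^2)^{-1}\xi\cdot\nabla_y e^{-iy\cdot\lambda\xi}$, we get $O(\lambda^{-N})$. The constants depend on $\varphi$, $N$ and $a$, as allowed.

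\textbf{(ii)$\Rightarrow$(i).} We use the inversion formula: for a fixed $\varphi$ with $\|\varphi\|_{L^2}=1$,
\[
f=(2\pi)^{-n}\iint W_{\varphi_\lambda}[f](x,\eta)\,\varphi_\lambda(\cdot-x)e^{i\cdot\eta}\,dx\,d\eta .
\]
Choose $\chi$ supported in $B(x_0,\delta/2)$. The key choice is to set $\lambda=|\zeta|$ when estimating $\widehat{\chi f}(\zeta)$ for $\zeta\in\Gamma'$. Then
\[
\widehat{\chi f}(\zeta)=(2\pi)^{-n}\iint W_{\varphi_\lambda}[f](x,\eta)\,\mathcal{F}\bigl[\chi\varphi_\lambda(\cdot-x)\bigr](\zeta-\eta)\,dx\,d\eta .
\]
We split the $(x,\eta)$-integral into three regions.
\begin{itemize}
\item $|x-x_0|<\delta$ and $\eta=\lambda\xi$ with $\xi\in\Gamma$, $a^{-1}\le|\xi|\le a$. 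Here (ii) gives the bound $O(\lambda^{-N})$. The remaining factor is bounded polynomially in $\lambda$, and the integration region has polynomial volume.
\item $|x-x_0|\ge\delta$. Here $\chi(y)\varphi_\lambda(y-x)$ is $O(\lambda^{-K}\langle x\rangle^{-K})$ together with its derivatives.
\item $\eta$ outside the cone or outside the shell. Then $|\zeta-\eta|\gtrsim\lambda+|\eta|$. The Fourier transform of the window at frequency $\zeta-\eta$ decays like $\langle\lambda^{-b}(\zeta-\eta)\rangle^{-K}$, which beats the polynomial growth of $W_{\varphi_\lambda}[f]$, namely $\lambda^{C}\langle x\rangle^{C}\langle\eta\rangle^{C}$, which holds since $f\in\mathcal{S}'$.
\end{itemize}
This yields $|\widehat{\chi f}(\zeta)|\le C_N|\zeta|^{-N}$ on $\Gamma'$.

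The main obstacle is the bookkeeping in (ii)$\Rightarrow$(i). We must track the polynomial growth of $W_{\varphi_\lambda}[f]$ in $\lambda$, $x$ and $\eta$ uniformly, and check that the gains $\lambda^{-(1-b)K}$ coming from separating frequencies beat it. This is exactly where $b<1$ is essential; $b>0$ is used in the spatial localization in (i)$\Rightarrow$(ii).
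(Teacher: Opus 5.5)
Your proof is correct. There is nothing in the paper to compare it with: the paper does not prove this proposition and only quotes it from Kato, Kobayashi and Ito. Your argument is the standard one for this characterization:
\begin{itemize}
\item for (i)$\Rightarrow$(ii), you localize $f$ near $x_0$ and split the Fourier-side convolution at $|\eta|=\epsilon\lambda$;
\item for (ii)$\Rightarrow$(i), you use the reconstruction formula $f=(2\pi)^{-n}\|\varphi\|_{L^2}^{-2}W_{\varphi_\lambda}^{*}W_{\varphi_\lambda}f$, with $\lambda=|\zeta|$ and a three-region split.
\end{itemize}

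A few small points to tidy.
\begin{itemize}
\item The reduction to $\chi\equiv 1$ on $B(x_0,2\delta)$ is not literally ``shrinking the support.'' It is the usual lemma that multiplying $\chi f$ by a $C_0^\infty$ function (here $\rho/\chi$) preserves rapid decay on a slightly smaller cone. That lemma is proved by the same convolution splitting as your first bullet.
\item In the far-field term, the integration by parts is superfluous, and its prefactor should be $(-i\lambda|\xi|^2)^{-1}$. The finite-order seminorm bound for $f\in\mathcal{S}'$, applied to $(1-\chi)\overline{\varphi_\lambda(\cdot-x)}e^{-i\cdot\lambda\xi}$, already gives $O(\lambda^{m-K})$.
\item In the third region of (ii)$\Rightarrow$(i), the bookkeeping you flag closes as follows. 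Write $K=K_1+K_2$ and use $(\lambda+|\eta|)^{-K}\le \lambda^{-K_1}\langle\eta\rangle^{-K_2}$, with $K_2$ fixed large enough to integrate in $\eta$. This yields $\lambda^{-(1-b)K+K_2+C}$, which decays for every $b<1$ as $K\to\infty$.
\item The condition $b>0$ is also needed in the second region of (ii)$\Rightarrow$(i), not only in (i)$\Rightarrow$(ii). It is what makes $\chi\,\varphi_\lambda(\cdot-x)$ of size $O(\lambda^{-K})$ when $|x-x_0|\ge\delta$.
\end{itemize}
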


The following theorem is our main result, which provides a characterization of the wave front set $WF(u(t))$ for the fractional Schr\"{o}dinger equation via the wave packet transform.

\begin{theorem}
	\label{thm:main}
	Suppose Assumption \ref{ass:potential} holds.
	Let $u_0 \in L^2(\mathbb{R}^n)$ and let $u(t,x)$ be the solution to \eqref{eq:main} in the class $C(\mathbb{R}; L^2(\mathbb{R}^n))$.
	Let $b$ be a parameter satisfying $0 < b < (2-\theta)/2$.
	Then, $(x_0, \xi_0) \notin WF(u(t))$ if and only if there exists a conic neighborhood $V = K \times \Gamma$ of $(x_0, \xi_0)$
	such that for all $N \in \mathbb{N}$, all $a \ge 1$, and all $\varphi \in \mathcal{S}(\mathbb{R}^n) \setminus \{0\}$,
	there exist constants $\lambda_0 > 0$ and $C_{N,a,\varphi} > 0$ satisfying
	\begin{equation}
		|W_{\varphi_{\lambda}}[u_0](x(0; t, x, \lambda\xi), \xi(0; t, x, \lambda\xi))| \le C_{N,a,\varphi}\lambda^{-N}
	\end{equation}
	for all $\lambda > \lambda_0$ and $(x, \xi) \in V$ with $a^{-1} \le |\xi| \le a$, where,$(x(s), \xi(s)) := (x(s; t, x, \lambda\xi), \xi(s; t, x, \lambda\xi))$ is the solution to the following initial-value problem of Hamilton equations:
	\begin{equation}
		\begin{cases}
			\frac{d}{ds}x(s) = \theta |\xi(s)|^{\theta-2}\xi(s), & x(t)=x,            \\
			\frac{d}{ds}\xi(s) = -\nabla V(x(s)),                 & \xi(t)=\lambda\xi.
		\end{cases}
		\label{eq:hamilton}
	\end{equation}
\end{theorem}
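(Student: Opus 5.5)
The plan follows the Kato--Kobayashi--Ito method: transform the equation by $W_{\varphi_\lambda}$, integrate along the characteristics \eqref{eq:hamilton}, and show that the error terms are $O(\lambda^{-N})$. Combined with Proposition \ref{prop:wfs_characterization} applied to $f=u(t)$, this gives the statement, since (ii) there is exactly the smallness of $W_{\varphi_\lambda}[u(t)](x,\lambda\xi)$ on $V$.

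\textbf{Step 1 (transformed equation).} Apply $W_{\varphi_\lambda}$ to \eqref{eq:main}. Taylor expand both the symbol $|\eta|^\theta$ about $\eta=\xi$ and the potential $V(y)$ about $y=x$, up to order $M$. At first order this gives
\[
\Bigl(i\partial_t - i\theta|\xi|^{\theta-2}\xi\cdot\nabla_x + i\nabla V(x)\cdot\nabla_\xi - \bigl(|\xi|^\theta+V(x)-x\cdot\nabla V(x)\bigr)\Bigr)W_{\varphi_\lambda}[u](t,x,\xi)=R_\lambda u,
\]
where $R_\lambda u$ is a sum of wave packet transforms $W_{\psi}[u]$ with modified windows. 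These windows are $(y-x)^\alpha\varphi_\lambda$ from the potential and derivatives $\partial^\beta\varphi_\lambda$ from the symbol, with coefficients $\partial^\alpha V(x)$ and $\partial^\beta|\xi|^\theta$.

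\textbf{Step 2 (integration along characteristics).} The left-hand side is a transport equation along the flow \eqref{eq:hamilton}, with a real phase. Solving it by Duhamel's formula gives
\[
W_{\varphi_\lambda}[u(t)](x,\lambda\xi)=e^{i\Phi}\,W_{\varphi_\lambda}[u_0](x(0),\xi(0)) - i\int_0^t e^{i\Phi_s}(R_\lambda u)(s,x(s),\xi(s))\,ds .
\]
It therefore suffices to show that the remainder integral is $O(\lambda^{-N})$ uniformly for $(x,\xi)\in V$ with $a^{-1}\le|\xi|\le a$. Once that holds, the "if" and "only if" directions follow symmetrically by Proposition \ref{prop:wfs_characterization}. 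The flow is reversible, so one can also run the argument from $0$ to $t$.

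\textbf{Step 3 (bounding the remainder, the main obstacle).} The remainder terms are not automatically small. Their windows still act on $u$, so I would argue by induction, in the spirit of Kato--Kobayashi--Ito.

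First, one needs bounds on the flow. For $\lambda$ large, conservation of $|\xi|^\theta+V(x)$ together with Assumption \ref{ass:potential} should give $|\xi(s)|\sim\lambda$ and $|x(s)-x|\lesssim \lambda^{\theta-1}|s|$. The condition $\nu<\theta/(\theta-1)$ is exactly what guarantees that $V(x(s))=O(\langle\lambda^{\theta-1}\rangle^{\nu})=o(\lambda^\theta)$ when $1<\theta<2$. For $\theta\le 1$ the velocity stays bounded, so $x(s)$ stays in a compact set and no growth condition is needed.

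Next, the symbol terms. A $k$-th derivative of $|\eta|^\theta$ is of size $\lambda^{\theta-k}$, while $\partial^\beta\varphi_\lambda$ contributes a factor $\lambda^{b|\beta|}$. The $k$-th term is therefore of size $\lambda^{\theta-k+bk}=\lambda^{\theta-k(1-b)}$. The potential terms, with windows $(y-x)^\alpha\varphi_\lambda$, are similarly of size $\lambda^{-b|\alpha|}|\partial^\alpha V(x(s))|$.

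The second-order symbol term, of size $\lambda^{\theta-2+2b}$, is the critical one. The condition $b<(2-\theta)/2$ makes its exponent negative. Hence every term beyond the first order gains a negative power of $\lambda$, and this gain improves with the order.

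Iterating the Duhamel formula with these modified windows then converts each remainder into transformed initial data along the same flow, plus further remainders gaining $\lambda^{-\epsilon}$ each time. After finitely many iterations the remaining terms are $O(\lambda^{-N})$, using the $L^2$ conservation $\|W_\psi[u(s)]\|_{L^\infty}\le\|\psi\|_{L^2}\|u_0\|_{L^2}$.

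Two technical points need care. First, the symbol $|\eta|^\theta$ is singular near $\eta=0$. I would handle this with a cutoff to $|\eta|\gtrsim\lambda$, because the contribution from $|\eta-\lambda\xi|\gtrsim\lambda$ is $O(\lambda^{-N})$ by the Schwartz decay of $\hat\varphi_\lambda$ (here $b<1$ is used). Second, the neighborhood $V$ has to be shrunk slightly at each iteration step, which is why the statement only asks for some $\lambda_0$ and some conic neighborhood.
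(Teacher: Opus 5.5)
\textbf{Gap: the second-order potential terms.} Your route is the paper's. It uses the wave packet transform with a frequency cutoff away from $\eta=0$, Taylor expansion of the symbol and of $V$, and the transport equation integrated along \eqref{eq:hamilton}. It then runs an induction on the power of $\lambda$ over all windows, where the windows $\partial^\alpha\varphi$ and $y^\alpha\varphi$ produce the factors $\lambda^{\theta-|\alpha|(1-b)}$ and $\lambda^{-b|\alpha|}|\partial^\alpha V(x(s))|$. Your energy-conservation bootstrap for $|\xi(s)|\sim\lambda$ and your $\lambda$-dependent frequency cutoff are harmless variants of the paper's Picard iteration and of its fixed cutoff with the $|\eta-\xi(s)|\lessgtr\lambda^c$ splitting.

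The step that does not go through as written is ``hence every term beyond the first order gains a negative power of $\lambda$''. You only checked this for the symbol terms. Take the potential terms with $|\alpha|=2$, with $1<\theta<2$ and $\nu>2$. The coefficient $|\partial^\alpha V(x(s))|$ really can be of size $\langle x(s)\rangle^{\nu-2}\sim\lambda^{(\theta-1)(\nu-2)}$: take $V=\langle x\rangle^\nu$ and $|s-t|\gtrsim 1$, where $|x(s)|\sim\lambda^{\theta-1}$. The factor is then $\lambda^{-2b+(\theta-1)(\nu-2)}$, which is a gain only if $b>(\theta-1)(\nu-2)/2$.

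This is not implied by $b<(2-\theta)/2$. For $\theta=3/2$, $\nu=2.9<\theta/(\theta-1)=3$ and $b=0.1$ the exponent is $+0.25$. With absolute-value bounds, neither the induction nor your iterated Duhamel expansion closes in that case. The paper bounds this term explicitly by $\lambda^{-\sigma-2b+(\theta-1)(\nu-2)}$, but it too attributes the negativity of the exponent to the hypothesis on $b$. What the argument actually needs is $(\theta-1)(\nu-2)_+/2<b<(2-\theta)/2$. This interval is nonempty precisely because $\nu<\theta/(\theta-1)$, since $(\theta-1)(\nu-2)<2-\theta\iff(\theta-1)\nu<\theta$. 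That, not $V(x(s))=o(\lambda^\theta)$, is where the growth condition does its real work in the remainder estimates. You should add this lower bound on $b$, or treat the quadratic part of $V$ differently.

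\textbf{Two smaller points.} First, shrinking $V$ at each step is unnecessary. Every remainder is evaluated at $(s',x(s'),\xi(s'))$ on the same bicharacteristic. So the paper's hypothesis $P(N)$, stated for all windows and all intermediate times along the curves issuing from a fixed $V$, closes without changing $V$. If you did shrink at every step, the shrinkings would have to be summable, because the statement asks for one $V$ that works for all $N$.

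Second, check the signs in Step 1. With $W_\varphi$ as defined in the paper, $W_{\varphi_\lambda}[(D-\xi)u](x,\xi)=-i\nabla_xW_{\varphi_\lambda}[u](x,\xi)$ and $W_{\varphi_\lambda}[(\cdot-x)u](x,\xi)=(i\nabla_\xi-x)W_{\varphi_\lambda}[u](x,\xi)$. The transport part should therefore read $i\partial_t+i\theta|\xi|^{\theta-2}\xi\cdot\nabla_x-i\nabla V(x)\cdot\nabla_\xi$. With your signs the transform would be transported along the time-reversed flow rather than \eqref{eq:hamilton}.
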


As shown by the following corollary, there exists a decay rate of the potential such that the characteristic curves in Theorem~\ref{thm:main} reduce to those of the potential-free system.
This condition corresponds to a short-range assumption on the potential in the study of the propagation of singularities for solutions to the Schr\"odinger equation.

\begin{corollary}\label{wfs:cor:main_no_potential}
	In the case $1<\theta<2$, assume that there exists $\nu<\frac{\theta}{2(\theta-1)}$ such that for any multi-index $\alpha\in \mathbb{Z}_{\geq 0}^n$, there exists a constant $C_\alpha>0$ satisfying
	$$
		|\partial_x^\alpha V(x)|\le C_\alpha\langle x\rangle^{\nu-|\alpha|},\quad  x\in\R^n.
	$$
	Let $b$ satisfy $(\theta-1)(\nu-1)<b<(2-\theta)/2$.
	Then, the assertions (i) and (ii) of Theorem \ref{thm:main} are equivalent, where $(\tilde{x}(s),\tilde{\xi}(s))$  is a solution to the following initial-value problem of the potential-free Hamilton equation
	\begin{equation}\label{wfs:eq:characteristics_ode_no_potential}
		\begin{cases}
			\frac{d}{ds}\tilde{x}(s)=\theta|\tilde{\xi}(s)|^{\theta-2}\tilde{\xi}(s), & \tilde{x}(t)=x,            \\
			\frac{d}{ds}\tilde{\xi}(s)=0,                                             & \tilde{\xi}(t)=\lambda\xi.
		\end{cases}
	\end{equation}
\end{corollary}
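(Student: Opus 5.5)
We deduce the corollary from Theorem~\ref{thm:main}. The plan is to show that, on the relevant region, the true backward characteristics $(x(0),\xi(0))$ and the free ones $(\tilde x(0),\tilde\xi(0))=(x-t\theta\lambda^{\theta-1}|\xi|^{\theta-2}\xi,\ \lambda\xi)$ differ by much less than the resolution of the wave packet $\varphi_\lambda$. The window $\varphi_\lambda$ localizes at spatial scale $\lambda^{-b}$ and frequency scale $\lambda^{b}$. It therefore suffices to prove that
\[
|x(0)-\tilde x(0)| \le C\lambda^{-b-\varepsilon}, \qquad |\xi(0)-\lambda\xi|\le C\lambda^{b-\varepsilon}
\]
for some $\varepsilon>0$, uniformly for $(x,\xi)\in V$ with $a^{-1}\le|\xi|\le a$. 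The decay statements for the two wave packet transforms are then equivalent, possibly after shrinking $V$. Note that $\nu<\theta/(2(\theta-1))<\theta/(\theta-1)$, so Assumption~\ref{ass:potential} holds and Theorem~\ref{thm:main} applies.

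\textbf{Step 1: a priori bounds on the flow.} Since $|\xi(s)|\sim\lambda$, the speed is $|\dot x|\sim\lambda^{\theta-1}$, so $|x(s)|\lesssim\lambda^{\theta-1}$ on $[0,t]$. A bootstrap gives
\[
|\xi(s)-\lambda\xi|\le\int|\nabla V(x)|\,ds\lesssim \lambda^{(\theta-1)(\nu-1)}
\]
when $\nu\ge1$; when $\nu<1$ this difference is $O(1)$. The exponent $(\theta-1)(\nu-1)$ is less than $1$, so $|\xi(s)|\sim\lambda$ indeed persists. Hence the frequency error is $O(\lambda^{(\theta-1)(\nu-1)})$, which is $\ll\lambda^{b}$ exactly by the hypothesis $b>(\theta-1)(\nu-1)$.

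\textbf{Step 2: spatial error.} The map $\eta\mapsto\theta|\eta|^{\theta-2}\eta$ has derivative of size $\lambda^{\theta-2}$ near $|\eta|\sim\lambda$. Therefore
\[
|x(0)-\tilde x(0)|\lesssim \lambda^{\theta-2}\sup_s|\xi(s)-\lambda\xi|\lesssim\lambda^{\theta-2+(\theta-1)(\nu-1)}.
\]
This must be $\ll\lambda^{-b}$, i.e.\ we need $b<2-\theta-(\theta-1)(\nu-1)$. Since $b<(2-\theta)/2$, it suffices that $(\theta-1)(\nu-1)\le(2-\theta)/2$. This holds, with strict inequality giving room for $\varepsilon$, because it is equivalent to $\nu<\theta/(2(\theta-1))$. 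This is precisely where the hypothesis on $\nu$ enters.

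\textbf{Step 3: transferring decay (main obstacle).} We compare $W_{\varphi_\lambda}[u_0](X,\Xi)$ and $W_{\varphi_\lambda}[u_0](\tilde X,\tilde\Xi)$ when $|X-\tilde X|\ll\lambda^{-b}$ and $|\Xi-\tilde\Xi|\ll\lambda^b$. We write
\[
W_{\varphi_\lambda}[u_0](X,\Xi)=W_{\psi_\lambda}[u_0](\tilde X,\tilde\Xi),\qquad \psi(y)=e^{i\lambda^{-b}(\Xi-\tilde\Xi)\cdot y}\,\varphi\bigl(y-\lambda^{b}(X-\tilde X)\bigr),
\]
up to a unimodular phase factor. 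The window $\psi$ ranges over a bounded subset of $\mathcal S$ depending smoothly on the small parameters. The difficulty is that Theorem~\ref{thm:main} only gives constants depending on $\varphi$, with no uniformity over families of windows. To handle this, we will use the reproducing (inversion) formula for the wave packet transform: we express $W_{\psi_\lambda}[u_0]$ as an integral of $W_{\varphi_\lambda}[u_0]$ against a kernel concentrated within $\lambda^{-b}\times\lambda^{b}$ of the point and rapidly decaying outside. Decay on a slightly smaller conic neighborhood $V'\Subset V$ then transfers to the shifted points. The contributions far away are controlled by the $L^2$ bound on $u_0$ and the Schwartz tails, which give $O(\lambda^{-N})$. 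Applying the same argument symmetrically yields both directions of the equivalence.
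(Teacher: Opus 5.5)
Your argument is sound in outline, but it takes a different route from the paper. The paper does not use Theorem~\ref{thm:main} as a black box. It reruns the induction on $P(N)$ directly along the free curves $\tilde x(s)=x+(s-t)\theta\lambda^{\theta-1}|\xi|^{\theta-2}\xi$, $\tilde\xi(s)=\lambda\xi$. Because $-\nabla V\cdot\nabla_\xi$ is no longer part of the transport operator, the first-order Taylor term of $V$ survives as one extra remainder, $\nabla V(\tilde x(s))\,\lambda^{-b}W_{\psi_\lambda}[u](s,\tilde x(s),\tilde\xi(s))$ with $\psi(y)=y\varphi(y)$. The induction hypothesis and $|\tilde x(s)|\lesssim\lambda^{\theta-1}$ bound this by $C\lambda^{-\sigma-(b-(\theta-1)(\nu-1))}$. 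All other remainders are estimated as in the main proof.

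Your approach instead isolates why the potential can be dropped: the two backward flows end within $\lambda^{-b-\varepsilon}\times\lambda^{b-\varepsilon}$ of each other, below the phase-space resolution of $\varphi_\lambda$. This is modular, since no remainder term needs re-examination. The price is a separate lemma on the stability of wave-packet decay under sub-resolution shifts. Your Steps~1 and~2 are correct. The spatial condition in Step~2 is automatic once $(\theta-1)(\nu-1)<b<(2-\theta)/2$, since then $b+(\theta-1)(\nu-1)<2b<2-\theta$. So in your route, as in the paper's, the hypothesis on $\nu$ is exactly what makes such a $b$ exist.

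The point you leave unverified is in Step~3. The reproducing-formula argument needs decay of $W_{\varphi_\lambda}[u_0]$ on a whole box of size $\lambda^{-b+\epsilon'}\times\lambda^{b+\epsilon'}$ around the target point. You must therefore show that every point of that box is the image, for the same $\lambda$, of some point of $V$ under the relevant backward flow. For the free flow this is an explicit computation using $b<2-\theta$. For the true flow, which is what the implication (i)$\Rightarrow$(ii) requires, it needs a linearized-flow (Gronwall) estimate that you do not supply.

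You can bypass this entirely:
\begin{itemize}
\item Taylor-expand the shifted window $\psi(w)=\varphi(w-h')e^{iw\cdot k'}$ to order $M$ in $(h',k')$. Here $|h'|=\lambda^{b}|X-\tilde X|$ and $|k'|=\lambda^{-b}|\Xi-\tilde\Xi|$ are $O(\lambda^{-\varepsilon})$ by Steps~1--2.
\item The main terms involve finitely many fixed windows $w^\beta\partial^\alpha\varphi\in\mathcal S$ with bounded coefficients. The hypothesis applies to these at exactly the given points.
\item The remainder satisfies $\|R_M\|_{L^2}\le C_M\lambda^{-M\varepsilon}$, hence $|W_{(R_M)_\lambda}[u_0]|\le C_M\lambda^{-M\varepsilon}\|u_0\|_{L^2}$.
\end{itemize}
Choosing $M\varepsilon\ge N$ transfers the decay in both directions. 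This needs no shrinking of $V$ and avoids any uniformity issue over windows.
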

\begin{remark}
Corollary \ref{wfs:cor:main_no_potential} holds for $0 <\theta \le1$ without any decay condition on $V$.
\end{remark}
Furthermore, in the case $0<\theta\le1$, the following corollary holds, which characterizes the wave front set using the wave front set of initial data.

\begin{corollary}\label{wfs:cor:main_time_propagation}
	Let $u(t,x)$ be the solution to \eqref{eq:main} in $C(\mathbb{R};L^{2}(\mathbb{R}^{n}))$, and assume that the potential $V(x)$ satisfies the conditions in Corollary \ref{wfs:cor:main_no_potential}.
	Then, if the exponent of the fractional Laplacian is $\theta =1$, we have
	\begin{equation}
		WF(u(t)) =\chi_{t,0} (WF(u_0)),
	\end{equation}
	where $\chi_{t,0}$ is the mapping defined by using the solution to \eqref{wfs:eq:characteristics_ode_no_potential} as follows:
	\begin{equation*}
		\chi_{\tau,t}(x,\xi) = (\tilde{x}(\tau;t,x,\xi), \tilde{\xi}(\tau;t,x,\xi)).
	\end{equation*}
	If $0<\theta<1$, we have
	\begin{equation}
		WF(u(t)) = WF(u_0).
	\end{equation}
\end{corollary}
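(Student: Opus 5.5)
The plan is to combine Corollary \ref{wfs:cor:main_no_potential} (valid for $0<\theta\le 1$ with no decay condition on $V$, by the Remark following it) with Proposition \ref{prop:wfs_characterization} applied to $u_0$. The key input is that the potential-free system \eqref{wfs:eq:characteristics_ode_no_potential} can be solved explicitly:
\[
\tilde\xi(s)=\lambda\xi,\qquad \tilde x(0;t,x,\lambda\xi)=x-t\theta\lambda^{\theta-1}|\xi|^{\theta-2}\xi .
\]
So the criterion of Theorem \ref{thm:main} becomes: $(x_0,\xi_0)\notin WF(u(t))$ if and only if there is a neighborhood $V=K\times\Gamma$ with
\[
|W_{\varphi_\lambda}[u_0](x-t\theta\lambda^{\theta-1}|\xi|^{\theta-2}\xi,\lambda\xi)|\le C\lambda^{-N}
\]
for $\lambda>\lambda_0$, $(x,\xi)\in V$, $a^{-1}\le|\xi|\le a$.

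Two preliminary remarks reconcile the range $\lambda>\lambda_0$ with the range $\lambda\ge1$ in Proposition \ref{prop:wfs_characterization}.
\begin{itemize}
\item For $1\le\lambda\le\lambda_0$, the Cauchy--Schwarz inequality gives $|W_{\varphi_\lambda}[u_0]|\le\|\varphi\|_{L^2}\|u_0\|_{L^2}$.
\item Consequently a bound $C\lambda^{-N}$ for $\lambda>\lambda_0$ upgrades to a bound for all $\lambda\ge1$ after enlarging the constant.
\end{itemize}

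\emph{Case $\theta=1$.} The shift equals $t\xi/|\xi|$ and is independent of $\lambda$. Let $\Phi_t(x,\xi)=(x-t\xi/|\xi|,\xi)$. Then $\Phi_t$ is a homeomorphism of $\R^n\times(\R^n\setminus\{0\})$, with inverse $\Phi_{-t}$. It commutes with the dilation $\xi\mapsto\alpha\xi$ and preserves $|\xi|$. Hence it maps sets of the form ``open conic neighborhood of $(x_0,\xi_0)$'' onto sets of the same form around $\Phi_t(x_0,\xi_0)$, and it preserves the constraint $a^{-1}\le|\xi|\le a$. (If the product form $K\times\Gamma$ is needed, I would shrink to a product neighborhood inside the image, which is possible since the image is open and conic.)

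The displayed criterion therefore says exactly that condition (ii) of Proposition \ref{prop:wfs_characterization} holds for $u_0$ at $\Phi_t(x_0,\xi_0)$. Thus $(x_0,\xi_0)\notin WF(u(t))$ if and only if $\Phi_t(x_0,\xi_0)\notin WF(u_0)$. Since $\Phi_t(x,\xi)=\chi_{0,t}(x,\xi)$ and $\Phi_t^{-1}=\chi_{t,0}$, this gives $WF(u(t))=\chi_{t,0}(WF(u_0))$.

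\emph{Case $0<\theta<1$.} The shift is bounded by $|t|\theta\lambda^{\theta-1}a^{\theta-1}$, uniformly for $a^{-1}\le|\xi|\le a$, and it tends to $0$ as $\lambda\to\infty$.

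Suppose first $(x_0,\xi_0)\notin WF(u_0)$.
\begin{itemize}
\item Take $V=K\times\Gamma$ from Proposition \ref{prop:wfs_characterization}, with $K$ open.
\item Choose a smaller neighborhood $K'\Subset K$ of $x_0$, and choose $\lambda_0$ (depending on $a$) so large that the shift is less than $\mathrm{dist}(K',K^c)$ for $\lambda>\lambda_0$.
\item Then the shifted point lies in $K\times\Gamma$, and the bound for $u_0$ yields the criterion on $K'\times\Gamma$. Hence $(x_0,\xi_0)\notin WF(u(t))$.
\end{itemize}
Conversely, suppose the criterion holds on $K\times\Gamma$. For $x\in K'$ and $\lambda$ large, write $x=y-\text{shift}$ with $y=x+t\theta\lambda^{\theta-1}|\xi|^{\theta-2}\xi\in K$. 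The criterion at $(y,\xi)$ then bounds $W_{\varphi_\lambda}[u_0](x,\lambda\xi)$. Extending to all $\lambda\ge1$ by the first remark and applying Proposition \ref{prop:wfs_characterization} gives $(x_0,\xi_0)\notin WF(u_0)$.

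The main point needing care is this last step. The threshold $\lambda_0$ must depend only on $a$ and the geometry, not on $(x,\xi)$, which holds because the shift bound is uniform on $a^{-1}\le|\xi|\le a$. The form of the neighborhoods must also be compatible with Proposition \ref{prop:wfs_characterization}. I expect this to be routine once the explicit flow is written down.
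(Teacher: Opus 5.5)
Your proof is correct. For $\theta=1$ it is the paper's own argument: Corollary~\ref{wfs:cor:main_no_potential} at $\theta=1$, the $\lambda$-independence of the free flow, and Proposition~\ref{prop:wfs_characterization}. You also supply details the paper leaves implicit:
\begin{itemize}
\item the passage from $\lambda>\lambda_0$ to $\lambda\ge 1$ via $|W_{\varphi_\lambda}[u_0]|\le\|\varphi\|_{L^2}\|u_0\|_{L^2}$;
\item the fact that $(x,\xi)\mapsto(x-t\xi/|\xi|,\xi)$ preserves conic neighborhoods and the shell $a^{-1}\le|\xi|\le a$;
\item the identification $\Phi_t^{-1}=\chi_{t,0}$, which fixes the direction of the map.
\end{itemize}

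For $0<\theta<1$ your route differs from the paper's. The paper does not invoke Corollary~\ref{wfs:cor:main_no_potential} there. It reruns the induction of Theorem~\ref{thm:main} along the stationary curves $(x,\lambda\xi)$. The transport term $\nabla_\xi b(\lambda\xi)\cdot\nabla_x W_{\varphi_\lambda}u$ is moved into the remainder. That term is $\lambda^{b}$ times a wave packet transform with window $\nabla\varphi$, so the induction hypothesis makes it $O(\lambda^{\theta-1+b-\sigma})$, at the cost of the extra restriction $b<1-\theta$.

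You instead take Corollary~\ref{wfs:cor:main_no_potential}, extended to $\theta<1$ by the Remark, as a black box. The free spatial displacement $t\theta\lambda^{\theta-1}|\xi|^{\theta-2}\xi$ tends to $0$ uniformly on $a^{-1}\le|\xi|\le a$. So shrinking $K$ to $K'\Subset K$ and letting only $\lambda_0$ depend on $a$ turns that criterion into condition (ii) of Proposition~\ref{prop:wfs_characterization}, and conversely.

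What each route buys:
\begin{itemize}
\item Yours is purely soft. It needs no new estimates and no further constraint on $b$, since Proposition~\ref{prop:wfs_characterization} holds for any fixed $b\in(0,1)$. It also makes precise the paper's remark that the flows agree asymptotically for large $\lambda$.
\item The cost is that it rests on the Remark's extension of Corollary~\ref{wfs:cor:main_no_potential} to $\theta<1$, which the paper states without a separate proof.
\item The paper's route goes back to the induction machinery directly. It shows the $\lambda^{\theta-1}$-size transport is negligible at the wave-packet level.
\end{itemize}

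One small slip: since $\theta-1<0$ and $|\xi|\ge a^{-1}$, the displacement is bounded by $|t|\theta\lambda^{\theta-1}a^{1-\theta}$, not $|t|\theta\lambda^{\theta-1}a^{\theta-1}$. This does not affect the argument.
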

\begin{remark}
In the main theorems, although the theorem is described using different Hamiltonian flows for the cases $1 \leq \theta < 2$, $\theta = 1$, and $\theta < 1$, this does not lead to any contradiction. 
Indeed, the corresponding Hamiltonian flows coincide asymptotically for large  $\lambda$.
\end{remark}

We review some relations between our main theorems and known results.
In Nakamura \cite{Nakamura2009}, for variable-coefficient Schr\"odinger equations with sub-quadratic potentials, the wave front set of solutions is characterized in terms of that of the initial data by means of pseudo-differential operators and the associated Hamiltonian flow.
In Nakamura \cite{Nakamura2009WFSS}, for the same equation, the wave front set is characterized by $WF(e^{tH_0} u_0)$ where $H_0$ is the free Schr\"odinger operator. In Kato, Kobayashi and Ito \cite{KatoKobayashiIto2017} and Kato and Ito \cite{wfs:KatoIto2014},  the wave front set for Schr\"odinger equations with potentials is characterized by means of the wave packet transform, where the window function is taken to be a solution to the free Schr\"odinger equation with a rapidly decaying initial data. 
In contrast, in our theorem, the wave packet (the window function) does not involve solutions to the corresponding free equation.
This illustrates the difference in the propagation of singularities between the Schr\"odinger equation and the fractional Schr\"odinger equation, corresponding respectively to the cases $\theta=2$ and $\theta<2$.
The proof is based on the method developed by Kato, Kobayashi, and Ito \cite{KatoKobayashiIto2017}. 
More precisely, their approach allows one to reduce the fractional Schr\"odinger equation to a transport equation, and to obtain a representation of the solution by means of the method of characteristics (see also \cite{ItoKatoKobayashi2012}). 
The argument is completed by an induction on the decay in the parameter $\lambda$, where the representation formula is used to derive the corresponding decay estimates at each step.
However, since the symbol of the fractional Laplacian is singular at the origin, the proof requires a careful decomposition into a neighborhood of the origin and its complement. 
The key idea is to estimate these two regions separately. 
In particular, it is essential to construct the characteristic curves (Hamiltonian flows) so that they do not reach the origin. In this construction, unlike the Schr\"odinger case, the momentum equation contains the spatial derivative of the potential and therefore may grow in the $x$-variable. 
However, under the  assumption on the potential, one can still show that the momentum $\xi (t)$ grows at most linearly with respect to $\lambda$.

\subsection*{Notations}
Throughout this paper, we may use different notations depending on the context.
For clarity, we will often use the simplified notation $W_{\varphi}u(t, x, \xi)$ to denote $W_{\varphi}[u(t, \cdot)](x, \xi)$ when $u$ is the solution to \eqref{eq:main}.
Throughout the paper, $C$ and $C_j$ denote generic positive constants whose values may change from line to line. 
Constants depending on parameters such as $a$ or $b$ are sometimes denoted by $C_a$ or $C_{a,b}$.

The Fourier transform of a function $f$ is defined by
\[
  \mathcal{F}[f](\xi)
  = \int_{\mathbb{R}^n} e^{-i x \cdot \xi} f(x)\, dx,
\]
and its inverse Fourier transform by
\[
  \mathcal{F}^{-1}[g](x)
  = (2\pi)^{-n} \int_{\mathbb{R}^n} e^{i x \cdot \xi} g(\xi)\, d\xi.
\]

We also use the standard Japanese bracket notation
\[
  \langle x \rangle = (1+|x|^2)^{1/2},
\]
and similarly for $\langle \xi \rangle$.

\section{Estimates for characteristic curves}
In this section, as a preparation for the proof of the main theorem, we discuss the existence and the order estimates with respect to $\lambda$ of the solutions $(x(s), \xi(s))$ of \eqref{eq:hamilton}.

\begin{lemma}
	\label{lem:estimates}
	Let $ T < \infty$, $x \in K$ for a compact set of $\R^n$ and $a^{-1} \le  |\xi| \le a$ for $a>0$.
	Under Assumption \ref{ass:potential}, the solutions $(x(s), \xi(s))$ of \eqref{eq:hamilton} satisfy the following estimates for $|s-t| \le T$.
	There exist positive constants $C_{1}, C_{2}, C_{3}, C_{4}$, and $\lambda_0$ depending only on $a$, $\theta$, $K$ and $T$, but independent of $\lambda$ such that for all $\lambda \ge \lambda_0$, we have
	\begin{equation} \label{xi-lam-es}
		C_{2} \lambda \le |\xi(s)| \le C_{1} \lambda.
	\end{equation}
	Furthermore, for $x(s)$, we have
	\begin{equation} \label{x-lam-es}
		\begin{cases}
			C_{4} \lambda^{\theta-1} \le |x(s)| \le C_{3} \lambda^{\theta-1}, & \text{if } 1 < \theta < 2,   \\
			|x(s)| \le C_{3},                                                  & \text{if } 0 < \theta \le 1.
		\end{cases}
	\end{equation}
\end{lemma}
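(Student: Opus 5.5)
The plan is a continuity (bootstrap) argument on the interval $|s-t|\le T$, run on the \emph{a priori} hypothesis
\begin{equation*}
	\frac{\lambda}{2a}\le |\xi(s)|\le 2a\lambda .
\end{equation*}
The vector field $(\theta|\xi|^{\theta-2}\xi,\,-\nabla V(x))$ is smooth on $\R^n\times(\R^n\setminus\{0\})$, so the local Cauchy–Lipschitz theorem gives a unique solution. It extends as long as $\xi(s)$ stays away from $0$ and $(x(s),\xi(s))$ stays bounded. At $s=t$ we have $|\xi(t)|=\lambda|\xi|\in[\lambda/a,a\lambda]$, which lies strictly inside the hypothesis window. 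So it suffices to show that, for $\lambda\ge\lambda_0$, the hypothesis forces the strictly better bound $\frac{2\lambda}{3a}\le|\xi(s)|\le \frac{3a\lambda}{2}$. A standard continuity argument then shows that the maximal interval on which the hypothesis holds contains $[t-T,t+T]$. It also gives existence on that interval and \eqref{xi-lam-es} with $C_1=2a$, $C_2=1/(2a)$.

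\textbf{Step 1 (bound on $x$ under the hypothesis).} From $|\dot x(s)|=\theta|\xi(s)|^{\theta-1}$ we get $|x(s)|\le \sup_K|x|+T\theta\sup|\xi|^{\theta-1}$.
\begin{itemize}
	\item If $1<\theta<2$, use the upper bound $|\xi|\le 2a\lambda$. This gives $|x(s)|\le C+C\lambda^{\theta-1}\le C_3\lambda^{\theta-1}$.
	\item If $0<\theta\le 1$, the exponent $\theta-1$ is nonpositive, so use the lower bound $|\xi|\ge\lambda/(2a)\ge 1/(2a)$. This gives $|x(s)|\le C+T\theta(2a)^{1-\theta}=:C_3$, uniformly in $\lambda\ge1$.
\end{itemize}
This is the upper half of \eqref{x-lam-es}.

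\textbf{Step 2 (closing the bootstrap for $\xi$).} We have $|\xi(s)-\lambda\xi|\le \int_t^s|\nabla V(x(\sigma))|\,d\sigma$.
\begin{itemize}
	\item When $0<\theta\le1$, $x(\sigma)$ stays in the fixed ball $\{|x|\le C_3\}$ by Step 1. Since $V\in C^\infty$, $\nabla V$ is bounded there. Hence $|\xi(s)-\lambda\xi|\le CT$, and no growth assumption on $V$ is needed.
	\item When $1<\theta<2$, Assumption \ref{ass:potential} gives $|\nabla V(x)|\le C\langle x\rangle^{\nu-1}$. If $\nu\le1$ this is bounded. If $\nu>1$, Step 1 gives $|\xi(s)-\lambda\xi|\le CT\lambda^{(\theta-1)(\nu-1)}$.
\end{itemize}
The key point is that $\nu<\theta/(\theta-1)$ is equivalent to $(\theta-1)(\nu-1)<1$. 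So in every case $|\xi(s)-\lambda\xi|=o(\lambda)$, with constants depending only on $a,\theta,K,T$. Choosing $\lambda_0$ so that this error is at most $\lambda/(3a)$ yields the improved bound $\frac{2\lambda}{3a}\le|\xi(s)|\le\frac{3a\lambda}{2}$, which closes the bootstrap.

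\textbf{Main obstacle.} I expect the delicate point to be the circularity in the case $1<\theta<2$. The size of $x$ is controlled by $\xi$, and the change of $\xi$ is controlled by $\nabla V$ evaluated at $x$. The exponent condition $(\theta-1)(\nu-1)<1$ is exactly what makes this loop sub-linear in $\lambda$, and this is also where $\xi(s)$ is kept away from the origin, where the symbol is singular.

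\textbf{The lower bound on $|x(s)|$.} A lower bound $|x(s)|\ge C_4\lambda^{\theta-1}$ cannot hold near $s=t$, since $x(t)=x\in K$. I would therefore prove it only for $|s-t|\ge\delta$ with $\delta>0$ fixed, interpreting it as a statement about the displacement $|x(s)-x|$. For this, write $\dot x(s)=\theta|\lambda\xi|^{\theta-2}\lambda\xi+O(\lambda^{\theta-2}\cdot o(\lambda))$, using Step 2 and the smoothness of $\eta\mapsto|\eta|^{\theta-2}\eta$ on the annulus $\lambda/(2a)\le|\eta|\le 2a\lambda$. The leading term has constant direction and magnitude of order $\lambda^{\theta-1}$, so integrating gives $|x(s)-x|\ge c\,|s-t|\,\lambda^{\theta-1}$ for $\lambda$ large, and hence the lower bound for $|s-t|\ge\delta$.
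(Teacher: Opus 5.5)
Your proof is correct, but it obtains existence and the bounds by a different mechanism from the paper. The paper uses Picard iteration. It shows by induction on $N$ that every iterate $(x^{(N)}(s),\xi^{(N)}(s))$ obeys $C_2\lambda\le|\xi^{(N)}(s)|\le C_1\lambda$ and the upper bound on $|x^{(N)}(s)|$. All iterates then lie in the compact set $S_\lambda=\{|s-t|\le T,\ |x|\le C_3\lambda^{\theta-1},\ C_2\lambda\le|\xi|\le C_1\lambda\}$, on which the vector field is Lipschitz, so the iterates converge uniformly to the solution on the whole interval and the limit inherits the bounds. You instead start from the local solution, close a bootstrap on $\lambda/(2a)\le|\xi(s)|\le 2a\lambda$, and get existence from the continuation criterion.

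The analytic core is the same in both proofs. The bound $|x|\lesssim\lambda^{\theta-1}$ (or $|x|\lesssim 1$ when $\theta\le 1$) feeds into $|\xi(s)-\lambda\xi|\lesssim\lambda^{(\theta-1)(\nu-1)}=o(\lambda)$, which is exactly where $\nu<\theta/(\theta-1)$ enters. The Picard route gives existence on $|s-t|\le T$ without any blow-up alternative. The cost is propagating the bounds through every iterate, and as written its constants depend on each other circularly: $C_1$ must be fixed before $C_3$ and $\lambda_0$. Your bootstrap has several advantages:
\begin{itemize}
\item it makes the order of choices explicit ($C_1=2a$, $C_2=1/(2a)$, then $C_3$, then $\lambda_0$);
\item it treats the lower and upper bounds on $|\xi(s)|$ uniformly in $\theta$, whereas the paper writes out the lower bound only under the heading $0<\theta\le 1$;
\item for $0<\theta\le1$ it uses only local boundedness of $\nabla V$, which is all the hypothesis on $V$ provides there;
\item it yields the sharper expansion $\xi(s)=\lambda\xi+O(\lambda^{\kappa})$ with $\kappa<1$.
\end{itemize}

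Your observation about the lower bound $C_4\lambda^{\theta-1}\le|x(s)|$ is also right. It is false at $s=t$, where $x(t)=x\in K$ is bounded independently of $\lambda$. The paper's proof never establishes it; only upper bounds on $x^{(N)}$ are derived. Your replacement is $|x(s)-x|\ge c|s-t|\lambda^{\theta-1}$, and hence $|x(s)|\ge C_4(\delta)\lambda^{\theta-1}$ for $\delta\le|s-t|\le T$ and large $\lambda$. This is proved correctly: the segment from $\lambda\xi$ to $\xi(s)$ stays in $\{|\eta|\ge 2\lambda/(3a)\}$, so the mean value estimate for $\eta\mapsto|\eta|^{\theta-2}\eta$ applies there. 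The later arguments in the paper only need the upper bounds on $|x(s)|$, so this correction does not affect them.
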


\begin{proof}
	We prove the lemma using Picard's iteration method.
	We define the sequence of approximations as follows:
	\begin{equation*}
		\begin{cases}
			x^{(0)}(s) = x + \int_t^s \theta |\lambda \xi|^{\theta-2} \lambda \xi \, d\tau,           \\
			\xi^{(0)}(s) = \lambda \xi,                                                               \\
			x^{(N+1)}(s) = x + \int_t^s \theta |\xi^{(N)}(\tau)|^{\theta-2} \xi^{(N)}(\tau) \, d\tau, \\
			\xi^{(N+1)}(s) = \lambda \xi - \int_t^s \nabla_x V(x^{(N)}(\tau)) \, d\tau.
		\end{cases}
	\end{equation*}
	Set $C_K:=\sup_{x\in K}|x|$.
We show that \eqref{xi-lam-es} and \eqref{x-lam-es} are true for $\xi^{(N)}$ and $x^{(N)}$ for any $N$ by induction. From the assumption $a^{-1} \leq |\xi| \leq a$, we have
 	\begin{equation*}
		a^{-1}\lambda \leq |\xi^{(0)}(s)| \leq a \lambda,
	\end{equation*}
	which implies \eqref{xi-lam-es}  for $N=0$.
	From the definition of $x^{(0)}(s)$, we obtain the following inequality:
	\begin{equation*}
		\begin{aligned}
			|x^{(0)}(s)| & = \left|x+ \int_t^s \theta |\lambda \xi|^{\theta-2} \lambda \xi \, d\tau\right| \\
			             & \le |x| + |s-t| \theta |\lambda \xi|^{\theta-1}                                 \\
			             & = |x| + T \theta \lambda^{\theta-1} |\xi|^{\theta-1}.
		\end{aligned}
	\end{equation*}
	Note that $|x|\le C_K$ and $|\xi|^{\theta-1} \le \max\{a^{\theta-1}, a^{1-\theta}\}$. Hence
	\begin{equation*}
		|x^{(0)}(s)| \le C_K + T\theta \max\{a^{\theta-1}, a^{1-\theta}\}\,\lambda^{\theta-1}
		\le
		\begin{cases}
			C_{3}\lambda^{\theta-1} & \text{if } 1 < \theta < 2,   \\
			C_{3}                  & \text{if } 0 < \theta \le 1.
		\end{cases}
	\end{equation*}
	Thus, the upper estimate for $x^{(0)}(s)$ holds for $N=0$.
	Next, assuming that the inequalities in Lemma \ref{lem:estimates} hold for $N$, we show that they hold for $N+1$.
	First, we consider the estimate for $\xi^{(N+1)}(s)$. The following inequality holds:
	\begin{equation*}
		\begin{aligned}
			|\xi^{(N+1)}(s)| & \le a\lambda + \int_t^s |\nabla_x V(x^{(N)}(\tau))| d\tau                 \\
			                 & \le a\lambda + T  \sup_{|s-t|\leq T} |\nabla_x V(x^{(N)}(s))|             \\
			                 & \le a\lambda + T C \sup_{|s-t|\leq T} \langle x^{(N)}(s) \rangle^{\nu-1}.
		\end{aligned}
	\end{equation*}
	Here, we consider the upper estimate in the case $0 < \theta \leq 1$.
	\\
	When $\nu \leq 1$, since $\sup_{|s-t|\leq T} \langle x^{(N)}(s) \rangle^{\nu-1} \le 1$, we have
	\begin{equation*}
		|\xi^{(N+1)}(s)| \le a\lambda + T C \le (a + T C) \lambda.
	\end{equation*}
	Thus, taking $\lambda$ sufficiently large, the upper estimate holds in this case.
	Next, we consider the upper estimate in the case $1 < \theta < 2$.
	Regarding the case $\nu \leq 1$, since $\sup_{|s-t|\leq T} \langle x^{(N)}(s) \rangle^{\nu-1} \le 1$, we have
	\begin{equation*}
		|\xi^{(N+1)}(s)| \le a\lambda + T C \le (a + T C) \lambda.
	\end{equation*}
	Thus, the upper estimate holds in this case.
	Regarding the case $\nu > 1$, using the inductive hypothesis and $\langle x \rangle \leq 1 + |x|$, we have
	\begin{equation*}
		\begin{aligned}
			\sup_{|s-t|\leq T} \langle x^{(N)}(s) \rangle^{\nu-1} & \le \sup_{|s-t|\leq T}(1+|x^{(N)}(s)|)^{\nu-1} \\
			                                                      & \leq(1+C_{3} \lambda^{\theta-1})^{\nu-1}         \\
			                                                      & =(1+C_{3})^{\nu-1}\lambda^{(\theta-1)(\nu-1)}.
		\end{aligned}
	\end{equation*}
	Therefore, since the assumption $\nu<\frac{\theta}{\theta-1}$, we obtain
	\begin{equation*}
		\begin{aligned}
			|\xi^{(N+1)}(s)| & \le a\lambda + T C (1+C_{3})^{\nu-1} \lambda^{(\theta-1)(\nu-1)}                   \\
			                 & \le \left(a + T C (1+C_{3})^{\nu-1} \lambda^{(\theta-1)(\nu-1)-1} \right) \lambda.
		\end{aligned}
	\end{equation*}
	Thus, the upper estimate holds in this case as well.
	From the above, the upper estimate for $\xi^{(N+1)}(s)$ holds when $1 < \theta < 2$.
	Next, we consider the lower estimate for $\xi^{(N+1)}(s)$ in the case $0 < \theta \le 1$.
	We have
	\begin{equation*}
		\begin{aligned}
			|\xi^{(N+1)}(s)| & \ge a^{-1}\lambda - \int_t^s |\nabla_x V(x^{(N)}(\tau))| d\tau                 \\
			                 & \ge a^{-1}\lambda - TC  \sup_{|s-t|\leq T} \langle x^{(N)}(s) \rangle^{\nu-1}.
		\end{aligned}
	\end{equation*}
	When $\nu \le 1$, we have
	\begin{equation*}
		|\xi^{(N+1)}(s)| \ge a^{-1}\lambda - TC
		= (a^{-1} - TC \lambda^{-1}) \lambda.
	\end{equation*}
	Thus, taking $\lambda$ sufficiently large, the lower estimate for $\xi^{(N+1)}(s)$ holds.
	When $\nu > 1$, we obtain
	\begin{equation*}
		\begin{aligned}
			\sup_{|s-t|\leq T} \angles{x^{(N)}(s)}^{\nu-1} \le (1+C_{3})^{\nu-1}\la^{(\theta-1)(\nu-1)}.
		\end{aligned}
	\end{equation*}
	Therefore,
	\begin{align*}
		|\xi^{(N+1)}(s)| & \ge a^{-1}\lambda - T C (1+C_{3})^{\nu-1} \lambda^{(\theta-1)(\nu-1)}                 \\
		                 & = \left(a^{-1} - T C (1+C_{3})^{\nu-1} \lambda^{(\theta-1)(\nu-1)-1} \right) \lambda.
	\end{align*}
	Hence, taking $\lambda$ sufficiently large, since $(\theta -1)(\nu -1) -1 <0$, the lower estimate for $\xi^{(N+1)}(s)$ holds.
	From the above, the lower estimate for $\xi^{(N+1)}(s)$ holds for the case $0 < \theta \le 1$.
	Next, we consider the estimate for $x^{(N+1)}(s)$. The following inequality holds:
	\begin{equation*}
		\begin{aligned}
			|x^{(N+1)}(s)| & = \left|x + \int_t^s \theta |\xi^{(N)}(\tau)|^{\theta-2} \xi^{(N)}(\tau) \, d\tau\right| \\
			               & \le |x| + \theta \int_t^s |\xi^{(N)}(\tau)|^{\theta-1} d\tau                             \\
			               & \le C_K + \theta \, T \sup_{|s-t|\leq T} |\xi^{(N)}(\tau)|^{\theta-1}.
		\end{aligned}
	\end{equation*}
	In the case $0 < \theta \leq 1$, from the lower estimate of $\xi^{(N)}(s)$ and the fact that $\lambda^{\theta-1} \le 1$, we have
	\begin{equation*}
		\begin{aligned}
			|x^{(N+1)}(s)| & \le C_K + \theta \, T (C_{2} \lambda)^{\theta-1} \\
			               & \le C_K + \theta \, T (C_{2})^{\theta-1}.
		\end{aligned}
	\end{equation*}
	Thus, the upper estimate holds.
	Next, we consider the upper estimate in the case $1 < \theta < 2$.
	From the upper estimate of $\xi^{(N)}(s)$, we have for sufficiently large $\lambda_0$
	\begin{equation*}
		\begin{aligned}
			|x^{(N+1)}(s)| & \le C_K + \theta \, T (C_{1} \lambda)^{\theta-1}             \\
			               & \le (C_K + \theta \, T C_{1}^{\theta-1}) \lambda^{\theta-1}.
		\end{aligned}
	\end{equation*}
	Thus, the upper estimate holds.
	This completes the induction, showing that the order estimates for the sequence of approximations $(x^{(N)}(s),\xi^{(N)}(s))$ are satisfied for any $N$.

	Next, using the order estimates for the sequence of approximations, we show that the solution $(x(s), \xi(s))$ to equation \eqref{eq:hamilton} is the limit of the sequence.
	We show this for the case $1 < \theta < 2$. The case $0 < \theta \le 1$ can be shown similarly.
	Hereinafter, we fix an arbitrary $\lambda \ge \lambda_0$.

	Define the region $S_\lambda$ by
	\begin{equation*}
		\begin{aligned}
		S_\lambda := \{ (s,x,\xi) \in \mathbb{R} \times \mathbb{R}^n \times \mathbb{R}^n :
		&  |s-t| \leq T,\\
		&  |x| \leq C_{3} \lambda^{\theta-1},\\
		& C_{2} \lambda \leq |\xi| \leq C_{1} \lambda \}.
		\end{aligned}
	\end{equation*}
	$S_\lambda$ is a bounded closed region. Also, from the order estimates for the sequence of approximations, $(s, x^{(N)}(s), \xi^{(N)}(s)) \in S_\lambda$ holds for any $N$.
	Define the function $F(s,x,\xi)$ as follows:
	\begin{equation*}
		F(s,x,\xi) := (\theta |\xi|^{\theta-2} \xi, -\nabla_x V(x)).
	\end{equation*}
We can easily check that $F$ is Lipschitz continuous on $S_{\lambda}$.
	It follows from the standard Picard--Lindel\"of theorem (applied on the compact interval $\{s:  |s-t|\le T\}$ and the closed set $S_{\lambda}$) that the integral equation
	\begin{equation*}
		z(s)=(x,\lambda\xi)+\int_{t}^{s} F(\tau,z(\tau))\,d\tau
	\end{equation*}
	admits a unique solution $z(s)=(x(s),\xi(s))$ with $(s,x(s),\xi(s))\in S_{\lambda}$, and that the Picard iterates $z^{(N)}(s)$ converge to $z(s)$ uniformly on $ |s-t|\le T$.
	Passing to the limit $N\to\infty$ in the inductive estimates for $(x^{(N)}(s),\xi^{(N)}(s))$ yields \eqref{xi-lam-es} and \eqref{x-lam-es} for $(x(s),\xi(s))$.
\end{proof}

\section{Representation of solutions via wave packet transform}
In this section, following Kato, Ito, and Kobayashi \cite{wfs:KatoIto2014}, we derive a representation formula for solutions via the wave packet transform.
We apply the wave packet transform $W_{\varphi_{\lambda}}$ to the equation in \eqref{eq:main}.
\begin{equation*}
	W_{\varphi_{\lambda}} [i\partial_{t}u] - W_{\varphi_{\lambda}}[(-\Delta)^{\theta/2}u] - W_{\varphi_{\lambda}}[V(x)u] = 0
\end{equation*}
First, for the time derivative term, we have
\begin{equation*}
	W_{\varphi_{\lambda}} [i\partial_{t}u](t, x, \xi) = i\partial_{t}W_{\varphi_{\lambda}} [u](t, x, \xi).
\end{equation*}
Next, we perform the calculation for the fractional Laplacian term.
\begin{multline}
W_{\varphi_{\lambda}}[(-\Delta)^{\theta/2}u](t, x, \xi)
= \int_{\mathbb{R}^n} \overline{\varphi_{\lambda}(y-x)}\, ((-\Delta)^{\theta/2}u(t, \cdot))(y)\, e^{-iy\cdot\xi}\, dy \\
= \int_{\mathbb{R}^n} \overline{\varphi_{\lambda}(y-x)}\,
\Bigl( \int_{\mathbb{R}^n} e^{iy\cdot\eta}|\eta|^{\theta}\widehat{u}(t, \eta)\, d\eta \Bigr)
 e^{-iy\cdot\xi}\, dy \\
= \int_{\mathbb{R}^n} \overline{\varphi_{\lambda}(y-x)}\,
\Bigl( \int_{\mathbb{R}^n} e^{iy\cdot\eta}(1-\chi(\eta))|\eta|^{\theta}\widehat{u}(t, \eta)\, d\eta \Bigr)
 e^{-iy\cdot\xi}\, dy \\
\quad + \int_{\mathbb{R}^n} \overline{\varphi_{\lambda}(y-x)}\,
\Bigl( \int_{\mathbb{R}^n} e^{iy\cdot\eta}\chi(\eta)|\eta|^{\theta}\widehat{u}(t, \eta)\, d\eta \Bigr)
 e^{-iy\cdot\xi}\, dy, 
\label{eq:frac_laplacian_expansion}
\end{multline}
where $\chi \in C^\infty (\R^n)$ is a cut-off function defined as follows:
\begin{equation*}
	\chi(\eta) =
	\begin{cases}
		1                                      & (|\eta| \le C_2\lambda_0/2),                   \\
		\text{smoothly decreasing from 1 to 0} & (C_2\lambda_0/2 < |\eta| < C_2\lambda_0), \\
		0                                      & (|\eta| \ge C_2\lambda_0)
	\end{cases}
\end{equation*}
and $\lambda_0$ and $C_2$ are constants satisfying the assertion of Lemma \ref{lem:estimates}.
We put  $a(\eta) = \chi(\eta)|\eta|^{\theta}$, $b(\eta) = (1-\chi(\eta))|\eta|^{\theta}$. 
Since $b(\eta)$ is a $C^{\infty}$ function, applying Taylor expansion to $b$ around $\eta=\xi$, we have
\begin{equation}
	b(\eta) = b(\xi) + \nabla_{\xi}b(\xi) \cdot (\eta-\xi) + R_{b}(\eta, \xi). \label{decomp-b}
\end{equation}
Here the remainder term $R_{b}(\eta, \xi)$ is decomposed as $R_{b}(\eta, \xi) := R_{b,1}(\eta, \xi) + R_{b,2}(\eta, \xi)$, defined respectively as follows:
\begin{align*}
	R_{b,1}(\eta, \xi) & := \sum_{2 \le |\alpha| < L} \frac{1}{\alpha!} (\partial_{\xi}^{\alpha}b)(\xi)(\eta-\xi)^{\alpha},                                           \\
	R_{b,2}(\eta, \xi) & := L \sum_{|\alpha|=L} \frac{(\eta-\xi)^{\alpha}}{\alpha!} \int_{0}^{1} (1-\tau)^{L-1} (\partial_{\xi}^{\alpha}b)(\xi+\tau(\eta-\xi)) d\tau.
\end{align*}
\eqref{eq:frac_laplacian_expansion} can be written as
\begin{equation}
	W_{\varphi_{\lambda}} [(-\Delta)^{\theta/2}u](t, x, \xi) = W_{\varphi_{\lambda}} [a(D)u](t, x, \xi) + W_{\varphi_{\lambda}} [b(D)u](t, x, \xi).
	\label{eq:symbol_decomposition}
\end{equation}
Applying \eqref{decomp-b} to the second term $W_{\varphi_{\lambda}} [b(D)u]$ in \eqref{eq:symbol_decomposition}, we consider the term
\begin{equation*}
	\nabla_{\xi}b(\xi) \cdot \iint \overline{\varphi_{\lambda}(y-x)} e^{iy\cdot(\eta-\xi)} (\eta-\xi) \widehat{u}(\eta) d\eta dy.
\end{equation*}
Using the relation $(\eta-\xi)e^{iy\cdot(\eta-\xi)} = -i\nabla_{y}e^{iy\cdot(\eta-\xi)}$, from the integration by parts with respect to $y$, we have
\begin{equation*}
	\begin{aligned}
		\iint \overline{\varphi_{\lambda}(y-x)} e^{iy\cdot(\eta-\xi)} (\eta-\xi) \widehat{u}(\eta) d\eta dy
		 & = -i \iint \overline{\varphi_{\lambda}(y-x)} \nabla_{y} e^{iy\cdot(\eta-\xi)} \widehat{u}(\eta) d\eta dy   \\
		 & = i \iint \nabla_{y} \overline{\varphi_{\lambda}(y-x)} e^{iy\cdot(\eta-\xi)} \widehat{u}(\eta) d\eta dy    \\
		 & = i \iint (-\nabla_{x}) \overline{\varphi_{\lambda}(y-x)} e^{iy\cdot(\eta-\xi)} \widehat{u}(\eta) d\eta dy \\
		 & = -i \nabla_{x} W_{\varphi_{\lambda}} [u](x, \xi).
	\end{aligned}
\end{equation*}
Thus, the first-order term is $-i \nabla_{\xi}b(\xi) \cdot \nabla_{x} W_{\varphi_{\lambda}} [u](x, \xi)$. Therefore,
\begin{align*}
	W_{\varphi_{\lambda}} [b(D)u](x, \xi)
	&= (b(\xi) - i \nabla_{\xi}b(\xi) \cdot \nabla_{x}) W_{\varphi_{\lambda}} [u](x, \xi) \\
	&\quad + \iint \overline{\varphi_{\lambda}(y-x)} e^{iy\cdot(\eta-\xi)} R_{b}(\eta, \xi)
	\widehat{u}(\eta)\, d\eta\, dy
\end{align*}
holds.
Furthermore, we apply the Taylor expansion to the potential term $V(y)$  with the order $L$ around $y=x$
\begin{equation*}
	V(y) = V(x) + \nabla V(x) \cdot (y-x) + R_{V}(y, x),
\end{equation*}
where $R_{V}(y, x) := R_{V,1}(y, x) + R_{V,2}(y, x)$ and 
\begin{align*}
	R_{V,1}(y, x) & := \sum_{2 \le |\alpha| < L} \frac{1}{\alpha!} (\partial_{x}^{\alpha}V)(x)(y-x)^{\alpha},                                      \\
	R_{V,2}(y, x) & := L \sum_{|\alpha|=L} \frac{(y-x)^{\alpha}}{\alpha!} \int_{0}^{1} (1-\tau)^{L-1} (\partial_{x}^{\alpha}V)(x+\tau(y-x)) d\tau.
\end{align*}
For the term  $\nabla V(x) \cdot \int (y-x) \overline{\varphi_{\lambda}(y-x)} u(y) e^{-iy\cdot\xi} dy$, using the relation $ye^{-iy\cdot\xi} = i\nabla_{\xi}e^{-iy\cdot\xi}$, we can rewrite it as follows:
\begin{align*}
	\nabla V(x) \cdot \int (y-x)\, \overline{\varphi_{\lambda}(y-x)}\, u(y)\, e^{-iy\cdot\xi}\, dy
	&= i (\nabla V(x) \cdot \nabla_{\xi}) W_{\varphi_{\lambda}} [u](x, \xi) \\
	&\quad - (x \cdot \nabla V(x)) W_{\varphi_{\lambda}} [u](x, \xi).
\end{align*}
Therefore, the potential term is expressed as
\begin{equation*}
	W_{\varphi_{\lambda}} [V(\cdot)u] = (V(x) - x \cdot \nabla V(x)) W_{\varphi_{\lambda}} [u] + i (\nabla V(x) \cdot \nabla_{\xi}) W_{\varphi_{\lambda}} [u] + W_{\varphi_{\lambda}} [R_{V}u],
\end{equation*}
where $W_{\varphi_{\lambda}} [R_{V}u] = W_{\varphi_{\lambda}} [R_{V,1}u] + W_{\varphi_{\lambda}} [R_{V,2}u]$.
Collecting the above results, we obtain the following first-order linear transport equation for the wave packet transform $W_{\varphi_{\lambda}}u$.
\begin{equation}
\begin{aligned}
	&(\partial_{t} + \nabla_{\xi}b(\xi) \cdot \nabla_{x} - \nabla_{x}V(x) \cdot \nabla_{\xi})\, W_{\varphi_{\lambda}} u(t, x, \xi) \\
	&\qquad = i P(x, \xi)\, W_{\varphi_{\lambda}} u(t, x, \xi) + i Ru(t, x, \xi),
\end{aligned}
	\label{eq:transport_equation}
\end{equation}
where $P(x, \xi) = -b(\xi) - V(x) + x \cdot \nabla V(x)$ and
\begin{align}
	Ru(t, x, \xi) & = - W_{\varphi_{\lambda}} [a(D)u(t, \cdot)](x, \xi)          \notag                                                      \\
	              & \quad - \iint \overline{\varphi_{\lambda}(y-x)} e^{iy\cdot(\eta-\xi)} R_{b}(\eta, \xi) \widehat{u}(\eta) d\eta dy \notag \\
	              & \quad - W_{\varphi_{\lambda}} [R_{V}u(t, \cdot)](x, \xi).
	\label{eq:remaindeRu}
\end{align}
In what follows, we derive a representation of the solution to \eqref{eq:transport_equation} by the method of characteristics.
Noting that Lemma \ref{lem:estimates} holds when $\lambda \ge \lambda_0$, the characteristic curves of \eqref{eq:transport_equation} are solutions to \eqref{eq:hamilton}.
Therefore, $W_{\varphi_{\lambda}}u(s, x(s), \xi(s))$ satisfies that
\begin{equation*}
	\frac{d}{ds}W_{\varphi_{\lambda}}u(s, x(s), \xi(s)) = i P(x(s), \xi(s)) W_{\varphi_{\lambda}}u(s, x(s), \xi(s)) + i Ru(s, x(s), \xi(s)),
\end{equation*}
which yields that the following integral equation holds
\begin{equation}
	\begin{aligned}
		W_{\varphi_{\lambda}}u(t, x(t), \xi(t)) & = \exp\left( \int_{0}^{t} i P(x(\tau), \xi(\tau)) d\tau \right) W_{\varphi_{\lambda}}[u_{0}](x(0), \xi(0))                                \\
		                                        & \quad + i \int_{0}^{t} \exp\left( \int_{s}^{t} i P(x(\tau), \xi(\tau)) d\tau \right) Ru(s, x(s), \xi(s)) ds. \label{eq:integral_equation}
	\end{aligned}
\end{equation}
\if0
where $P$, $R_1$, and $R_2$ are given as follows:
\begin{align*}
	P(x(s), \xi(s))     & = -|\xi(s)|^{\theta} - V(x(s)) + (x(s) \cdot \nabla_{x}V(x(s))) , \\
	R_{1}(\eta, \xi(s)) & = \sum_{2 \le |\alpha| < L} \frac{1}{\alpha!} (\partial_{\xi}^{\alpha}|\xi|^{\theta})\Big|_{\xi=\xi(s)} (\eta - \xi(s))^{\alpha}, \\
	R_{2}(\eta, \xi(s)) & = L \sum_{|\alpha|=L} \frac{(\eta - \xi(s))^{\alpha}}{\alpha!} \int_{0}^{1} (1-\tau)^{L-1} (\partial_{\xi}^{\alpha}|\xi|^{\theta})\Big|_{\xi=\xi(s)+\tau(\eta-\xi(s))} d\tau.
\end{align*}
\fi

\section{Proof of Main Theorem}
\begin{proof}[Proof of Theorem \ref{thm:main}]
	We prove (ii) $\Rightarrow$ (i) of Theorem \ref{thm:main}. The converse follows from a similar argument.
	We fix an arbitrary $a \ge 1$, and let $V = K \times \Gamma$ be a neighborhood of $(x_0, \xi_0)$ satisfying condition (ii) of Theorem \ref{thm:main}.
	To prove Theorem \ref{thm:main}, it suffices to show that there exists a $\delta >0$ such that the following assertion $P(N)$ holds for all $N \in \N \cup \{0 \}$ .

	$P(N)$: For any $\varphi \in \mathcal{S}(\mathbb{R}^n) \setminus \{0\}$
	 there exists a constant $C_{N, a, \varphi} > 0$ such that
	\begin{equation*}
		|W_{\varphi_{\lambda}}u(s, x(s), \xi(s))| \le C_{N, a, \varphi}\lambda^{-\delta N}
	\end{equation*}
	holds for all $\lambda, x, \xi, s$ satisfying $\lambda \ge \lambda_0$, $a^{-1} \le |\xi| \le a$, $x \in K$, $\xi \in \Gamma$, and $T_0 \le s \le T$.

	We prove $P(N)$ by induction on $N$.
	First, $P(0)$ holds trivially from the boundedness of the wave packet transform, since $u \in C(\mathbb{R}; L^2(\mathbb{R}^n))$.
	Next, assuming that $P(N)$ holds, we show that $P(N+1)$ holds for some $\delta > 0$. We put $\sigma = \delta N$.
	For this purpose, it is sufficient to show that the remainder term $Ru$ in the second term on the right-hand side of the integral equation \eqref{eq:integral_equation} can be estimated as
	\begin{equation*}
		|Ru(s, x(s), \xi(s))| \le C_{N, a, \varphi}\lambda^{-(\sigma+\delta)} .
	\end{equation*}
	The remainder term is given by
	\begin{equation*}
		\begin{aligned}
			Ru(s, x(s), \xi(s)) & = -W_{\varphi_{\lambda}}[a(D)u(s, \cdot)](x(s), \xi(s))                                                                       \\
			                    & \quad - \iint \overline{\varphi_{\lambda}(y-x(s))} e^{iy\cdot(\eta-\xi(s))} R_{b}(\eta, \xi(s)) \widehat{u}(s, \eta) d\eta dy \\
			                    & \quad - W_{\varphi_{\lambda}}[R_{V}u(s, \cdot)](x(s), \xi(s)).
		\end{aligned}
	\end{equation*}
	We estimate each term using Lemma \ref{lem:estimates}. The degree $L$ of the Taylor expansion is taken as a sufficiently large natural number depending on $N$ in the proof.
	First, we estimate the term $W_{\varphi_{\lambda}}[a(D)u(s, \cdot)](x(s), \xi(s))$.
	By definition, we have
	\begin{align*}
	&	W_{\varphi_{\lambda}}[a(D)u(s, \cdot)](x(s), \xi(s)) \\
     &   = \int \overline{\varphi_{\lambda}(y-x(s))} (a(D)u(s, \cdot))(y) e^{-iy\cdot\xi(s)} dy.
	\end{align*}
	Using the relation $e^{-iy\cdot\xi(s)} = \frac{-\Delta_y}{|\xi(s)|^2}e^{-iy\cdot\xi(s)}$, from the integration by parts, we obtain
	\begin{align*}
	&	W_{\varphi_{\lambda}}[a(D)u(s, \cdot)](x(s), \xi(s)) \\
     &   = \int   \left( \frac{-\Delta_y}{|\xi(s)|^2} \right)^M \left( \overline{\varphi_{\lambda}(y-x(s))} (a(D)u(s, \cdot))(y) \right)e^{-iy\cdot\xi(s)} dy.
	\end{align*}
	By Lemma \ref{lem:estimates}, since $|\xi(s)| \ge C_2 \lambda$ holds for $\lambda \ge \lambda_0$, we have
	\begin{equation*}
		\frac{1}{|\xi(s)|^{2M}} \le C_2^{-2M}\lambda^{-2M}.
	\end{equation*}
	On the other hand,  From the boundedness of $ \partial^\alpha _x a(D)$ in $L^2$ and a direct computation of the scaling of $\varphi$, we have 
	\begin{equation*}
		\int |\Delta_y^M (\overline{\varphi_{\lambda}(y-x(s))} (a(D)u(s, \cdot))(y))| dy \le C_{M, \varphi} \lambda^{2Mb} \| u \|_{L^2}.
	\end{equation*}
	Consequently, it follows that
	\begin{equation*}
		|W_{\varphi_{\lambda}}[a(D)u(s, \cdot)]| \le (C'_{\xi})^{-2M} C_{M, \varphi} \lambda^{-2M(1-b)}.
	\end{equation*}
	Since $M$ can be chosen arbitrarily, by choosing $M$ sufficiently large such that $-2M < -(\sigma+\delta)$, we obtain
	\begin{equation*}
		|W_{\varphi_{\lambda}}[a(D)u(s, \cdot)]| \le (C'_{\xi})^{-2M} C_{M, \varphi} \lambda^{-(\sigma+\delta)}.
	\end{equation*}
	Next, we proceed to the estimate of $W_{\varphi_{\lambda}} [R_b(D)u(s, \cdot)](x(s), \xi(s))$.
	First, we estimate $W_{\varphi_{\lambda}} [R_{b,1}(D)u(s, \cdot)](x(s), \xi(s))$.
	Let $R_{b,\alpha} := (\partial_{\xi}^{\alpha}|\xi|^{\theta})|_{\xi=\xi(s)}(\eta-\xi(s))^{\alpha}$. Then,
	\begin{multline*}
W_{\varphi_{\lambda}}[R_{b,\alpha}(D)u(s, \cdot)](x(s), \xi(s))\\
= \int \overline{\varphi_{\lambda}(y-x(s))} e^{-iy\cdot\xi(s)}
\left( \int e^{iy\cdot\eta} R_{b,\alpha}(\eta, \xi(s)) \widehat{u}(s, \eta)\, d\eta \right) dy \\
= \iint \overline{\varphi_{\lambda}(y-x(s))}
\bigl((\partial_{\xi}^{\alpha}|\xi|^{\theta})\bigr)\big|_{\xi=\xi(s)}
(\eta-\xi(s))^{\alpha} \widehat{u}(s, \eta)
 e^{iy\cdot(\eta-\xi(s))}\, d\eta \, dy.
\end{multline*}
	Using the relation $(\eta-\xi(s))^{\alpha}e^{iy\cdot(\eta-\xi(s))} = (-i\nabla_{y})^{\alpha}e^{iy\cdot(\eta-\xi(s))}$, from the integration by parts with respect to $y$, we obtain
	\begin{multline*}
		W_{\varphi_{\lambda}}[R_{b,\alpha}(D)u(s, \cdot)](x(s), \xi(s))\\
		= \iint \overline{\varphi_{\lambda}(y-x(s))}
		\bigl((\partial_{\xi}^{\alpha}|\xi|^{\theta})\bigr)\big|_{\xi=\xi(s)} \widehat{u}(s, \eta)
		(-i\nabla_{y})^{\alpha} e^{iy\cdot(\eta-\xi(s))}\, d\eta\, dy \\
		= i^{|\alpha|} \bigl((\partial_{\xi}^{\alpha}|\xi|^{\theta})\bigr)\big|_{\xi=\xi(s)}
		\iint \nabla_{y}^{\alpha}(\overline{\varphi_{\lambda}(y-x(s))}) \widehat{u}(s, \eta)
		e^{iy\cdot(\eta-\xi(s))}\, d\eta\, dy \\
		= i^{|\alpha|} \bigl((\partial_{\xi}^{\alpha}|\xi|^{\theta})\bigr)\big|_{\xi=\xi(s)}
		W_{\nabla_{y}^{\alpha}\varphi_{\lambda}}[u(s, \cdot)](x(s), \xi(s)).
	\end{multline*}
	For the coefficient $(\partial_{\xi}^{\alpha}|\xi|^{\theta})|_{\xi=\xi(s)}$, from the estimates of the characteristic curves and the properties of homogeneous functions, there exists a constant $K_{\alpha} > 0$ such that
	\begin{equation*}
		|(\partial_{\xi}^{\alpha}|\xi|^{\theta})|_{\xi=\xi(s)}| \le K_{\alpha}\lambda^{\theta-|\alpha|}.
	\end{equation*}
	On the other hand, for $W_{\nabla_{y}^{\alpha}\varphi_{\lambda}}[u(s, \cdot)](x(s), \xi(s))$, since
	\begin{equation*}
		\nabla_{y}^{\alpha}\varphi_{\lambda}(y-x(s)) = \lambda^{\frac{nb}{2}} \nabla_{y}^{\alpha}\varphi(\lambda^{b}(y-x(s))) = \lambda^{\frac{nb}{2}}\lambda^{b|\alpha|}(\nabla_{y}^{\alpha}\varphi)(\lambda^{b}(y-x(s))),
	\end{equation*}
	putting $\psi(y) = \nabla_{y}^{\alpha}\varphi(y)$, we can write
	\begin{equation*}
		W_{\nabla_{y}^{\alpha}\varphi_{\lambda}}[u(s, \cdot)](x(s), \xi(s)) = \lambda^{b|\alpha|} W_{\psi_{\lambda}}[u(s, \cdot)](x(s), \xi(s)).
	\end{equation*}
applying the inductive hypothesis to $W_{\psi_{\lambda}}$, we have
	\begin{equation*}
		|W_{\nabla_{y}^{\alpha}\varphi_{\lambda}}[u(s, \cdot)](x(s), \xi(s))| \le C'_{\sigma, a, \psi}\lambda^{-\sigma+b|\alpha|}.
	\end{equation*}
	From the above,
	\begin{equation*}
		\begin{aligned}
			|W_{\varphi_{\lambda}}[R_{b,\alpha}(D)u(s, \cdot)](x(s), \xi(s))| & \le K_{\alpha} C_{N, a, \psi} \lambda^{\theta-|\alpha|} \lambda^{-\sigma+b|\alpha|} \\
			                                                                & = K_{\alpha} C_{N, a, \psi} \lambda^{-(\sigma-\theta+|\alpha|(1-b))}
		\end{aligned}
	\end{equation*}
	holds. Here, since $\sigma-\theta+|\alpha|(1-b) \ge \sigma-\theta+2(1-b)$, setting $\delta_{1} := -\theta+2(1-b)$, we get $|W_{\varphi_{\lambda}}[R_{b,\alpha}(D)u(s, \cdot)](x(s), \xi(s))| \le C''_{\sigma, a, \alpha, \varphi}\lambda^{-(\sigma+\delta_{1})}$, which implies that
	\begin{equation*}
		\begin{aligned}
			|W_{\varphi_{\lambda}}[R_{b,1}(D)u(s, \cdot)](x(s), \xi(s))| & \le \sum_{2 \le |\alpha| < L} \frac{1}{\alpha!} |W_{\varphi_{\lambda}}[R_{b,\alpha}(D)u(s, \cdot)](x(s), \xi(s))| \\
 & \le C_{N, a, \varphi} \lambda^{-(\sigma+\delta_{1})}.
		\end{aligned}
	\end{equation*}
	Next, we estimate $W_{\varphi_{\lambda}} [R_{b,2}(D)u(s, \cdot)](x(s), \xi(s))$.
	Using the definition of the (inverse) Fourier transform on $\mathcal{S}'$, we have
	\begin{equation*}
		\begin{aligned}
			W_{\varphi_{\lambda}}[R_{b,2}(D)u(s, \cdot)](x(s), \xi(s)) & = \int \overline{\varphi_{\lambda}(y-x(s))} R_{2}(D)u(s, y) e^{-iy\cdot\xi(s)} dy                                                       \\
			                                                         & = \int \mathcal{F}[R_{b,2}(D)u(s, \cdot)](\eta) \mathcal{F}^{-1}[\overline{\varphi_{\lambda}(\cdot-x(s))}e^{-i\cdot\xi(s)}](\eta) d\eta   \\
			                                                         & = \int R_{b,2}(\eta, \xi(s))\widehat{u}(s, \eta) \mathcal{F}^{-1}[\overline{\varphi_{\lambda}(\cdot-x(s))}e^{-i\cdot\xi(s)}](\eta) d\eta.
		\end{aligned}
	\end{equation*}
	We divide the integral into the following two regions $A_1$ and $A_2$:
	\begin{equation*}
		A_{1} := \{ \eta \in \mathbb{R}^{n} \mid |\eta-\xi(s)| \le \lambda^{c} \}, \quad A_{2} := \{ \eta \in \mathbb{R}^{n} \mid |\eta-\xi(s)| > \lambda^{c} \}.
	\end{equation*}
	Here, $c$ is chosen such that $b < c < 1$. 
\begin{align*}
  I_{1}
    &:= \int_{A_{1}} R_{b,2}(\eta,\xi(s))\,\widehat{u}(s,\eta)\,
        \mathcal{F}^{-1}\!\Bigl[\overline{\varphi_{\lambda}(\cdot-x(s))}\,
        e^{-i(\cdot)\cdot\xi(s)}\Bigr](\eta)\,d\eta,\\
  I_{2}
    &:= \int_{A_{2}} R_{b,2}(\eta,\xi(s))\,\widehat{u}(s,\eta)\,
        \mathcal{F}^{-1}\!\Bigl[\overline{\varphi_{\lambda}(\cdot-x(s))}\,
        e^{-i(\cdot)\cdot\xi(s)}\Bigr](\eta)\,d\eta,
\end{align*}
so that
\begin{equation*}
  W_{\varphi_{\lambda}}[R_{b,2}(D)u(s,\cdot)](x(s),\xi(s)) = I_{1} + I_{2}.
\end{equation*}

	First, we estimate the integral over the region $A_{1}$. We start by estimating $R_{2}(\eta, \xi(s))$. From Lemma \ref{lem:estimates}, when $\lambda \ge \lambda_{0}$, we have
	\begin{equation*}
		|\xi(s) + \tau(\eta-\xi(s))| \ge |\xi(s)| - \tau|\eta-\xi(s)| \ge \lambda(C_2 - \lambda^{c-1}).
	\end{equation*}
	Thus, taking $\lambda$ sufficiently large, we get $|\xi(s) + \tau(\eta-\xi(s))| \ge \frac{1}{2}C_2\lambda > 1$. Since the following inequality holds for $|\alpha|=L$
	\begin{equation*}
		|(\partial_{\xi}^{\alpha}|\xi|^{\theta})|_{\xi=\xi(s)+\tau(\eta-\xi(s))}| \le K_{L}|\xi(s)+\tau(\eta-\xi(s))|^{\theta-L},
	\end{equation*}
we have 
\begin{equation*}
		\begin{aligned}
			|R_{b,2}(\eta, \xi(s))| & \le L \sum_{|\alpha|=L} \frac{|\eta-\xi(s)|^{L}}{\alpha!} \int_{0}^{1} (1-\tau)^{L-1} |(\partial_{\xi}^{\alpha}|\xi|^{\theta})|_{\xi=\xi(s)+\tau(\eta-\xi(s))}| d\tau \\
			                      & \le L \sum_{|\alpha|=L} \frac{|\eta-\xi(s)|^{L}}{\alpha!} K_{L} \left( \frac{1}{2}C_{2}\lambda \right)^{\theta-L} \int_{0}^{1} (1-\tau)^{L-1} d\tau        \\
			                      & = \sum_{|\alpha|=L} \frac{K_{L}}{\alpha!} |\eta-\xi(s)|^{L} \left( \frac{1}{2}C_{2} \right)^{\theta-L} \lambda^{\theta-L}.
		\end{aligned}
	\end{equation*}
	Using this, $I_1$ can be estimated as
	\begin{equation*}
		\begin{aligned}
			|I_{1}| & \le \int_{A_{1}} |R_{2}(\eta, \xi(s))| |\widehat{u}(s, \eta)| |\mathcal{F}^{-1}[\overline{\varphi_{\lambda}(\cdot-x(s))}e^{-i\cdot\xi(s)}](\eta)| d\eta                                                                                                       \\
			        & \le \sum_{|\alpha|=L} \frac{K_{L}}{\alpha!} \left( \frac{1}{2}C_{2} \right)^{\theta-L} \lambda^{\theta-L(1-c)} \int_{A_{1}} |\widehat{u}(s, \eta)| \cdot |\mathcal{F}^{-1}[\overline{\varphi_{\lambda}(\cdot-x(s))}e^{-i\cdot\xi(s)}](\eta)| d\eta \\
			        & \le \sum_{|\alpha|=L} \frac{K_{L}}{\alpha!} \left( \frac{1}{2}C_{2} \right)^{\theta-L} \|\varphi\|_{L^2} \|u_{0}\|_{L^2} \lambda^{\theta-L(1-c)}.
		\end{aligned}
	\end{equation*}
	Noting that $1-c > 0$ and taking $L$ sufficiently large, there exists a constant $C_{\sigma, \varphi} > 0$ such that
	\begin{equation*}
		|I_{1}| \le C_{\sigma, \varphi}\lambda^{-(\sigma+\delta_{1})}.
	\end{equation*}
	Next, we estimate the integral over the region $A_{2}$.
	\begin{equation*}
		\begin{aligned}
			|I_{2}| & \le  \int_{A_{2}} \frac{|\eta-\xi(s)|^{M}}{|\eta-\xi(s)|^{M}} |R_{b,2}(\eta, \xi(s))| |\widehat{u}(s, \eta)| \ |\mathcal{F}^{-1}[\overline{\varphi_{\lambda}(\cdot-x(s))}e^{-i\cdot\xi(s)}](\eta)| d\eta \\
			        & \le \lambda^{-Mc} \int_{A_{2}} |R_{b,2}(\eta, \xi(s))| |\widehat{u}(s, \eta)| \cdot |\eta-\xi(s)|^{M} |\mathcal{F}^{-1}[\overline{\varphi_{\lambda}(\cdot-x(s))}e^{-i\cdot\xi(s)}](\eta)| d\eta.
		\end{aligned}
	\end{equation*}
	Here, setting $\gamma := \xi(s) + \tau(\eta-\xi(s))$, we check the boundedness of $(\partial_{\xi}^{\alpha}b(\xi))|_{\xi=\gamma}$ for $|\alpha|=L$.
	When $|\gamma| \le \frac{C_2\lambda_{0}}{2}$, $(\partial_{\xi}^{\alpha}b(\xi))|_{\xi=\gamma}=0$ from the presence of the cut-off function $\chi$.
	When $|\gamma| > \frac{C_2\lambda_{0}}{2}$, $(\partial_{\xi}^{\alpha}b(\xi))|_{\xi=\gamma}$ is obviously bounded for large $|\alpha |=L$.
	Thus, there exists a constant $C_{N, a, L}^{\prime} > 0$ such that $|R_{2}(\eta, \xi(s))| \le C_{N, a, L}^{\prime}|\eta-\xi(s)|^{L}$, from which, we have from the Cauchy-Schwarz inequality that
\begin{equation*}
\begin{split}
|I_{2}|
\le \lambda^{-Mc} C_{N,a,L}^{\prime}
\int_{A_{2}} |\widehat{u}(s,\eta)|\,|\eta-\xi(s)|^{M+L}
\left|\mathcal{F}^{-1}\bigl[\overline{\varphi_{\lambda}(\cdot-x(s))}e^{-i\cdot\xi(s)}\bigr](\eta)\right|\,d\eta \\
\le \lambda^{-Mc} C_{N,a,L}^{\prime}\,\|u_{0}\|_{L^2}
\left(
\int_{\mathbb{R}^n} |\eta-\xi(s)|^{2(M+L)}
\left|\mathcal{F}^{-1}\bigl[\overline{\varphi_{\lambda}(\cdot-x(s))}e^{-i\cdot\xi(s)}\bigr](\eta)\right|^{2}
\,d\eta
\right)^{1/2}.
\end{split}
\end{equation*}
from  the scaling property and the change of variable, we have
	\begin{equation*}
		\begin{aligned}
			\int_{\mathbb{R}^n} |\eta-\xi(s)|^{2(M+L)} & |\mathcal{F}^{-1}[\overline{\varphi_{\lambda}(\cdot-x(s))}e^{-i\cdot\xi(s)}](\eta)|^{2} d\eta                                           \\
			                                           & = \int_{\mathbb{R}^n} |\eta-\xi(s)|^{2(M+L)} |e^{ix(s)\cdot\eta} \overline{\mathcal{F}^{-1}[\varphi_{\lambda}](\eta-\xi(s))}|^{2} d\eta \\
			                                           & = \int_{\mathbb{R}^n} |\zeta|^{2(M+L)} |\mathcal{F}^{-1}[\varphi_{\lambda}](\zeta)|^{2} d\zeta                                          \\
			                                           & = \lambda^{2b(M+L)} \int_{\mathbb{R}^n} |\rho|^{2(M+L)} |\mathcal{F}^{-1}[\varphi](\rho)|^{2} d\rho                                     \\
			                                           & = C_{N, M, \varphi}^{\prime} \lambda^{2b(M+L)}.
		\end{aligned}
	\end{equation*}
Therefore we have that
	\begin{equation*}
		|I_{2}| \le \lambda^{-Mc} C_{N, a, L}^{\prime} \|u_{0}\|_{L^2} (C_{N, M, \varphi}^{\prime} \lambda^{2b(M+L)})^{1/2} = C_{N, a, L, M, \varphi} \lambda^{-M(c-b)+bL}.
	\end{equation*}
Taking $M$ sufficiently large such that $-M(c-b)+bL < -\sigma - \delta_1$ (note that $c-b > 0$), we obtain
	\begin{equation*}
		|I_{2}| \le C_{N, a, \varphi}\lambda^{-\sigma-\delta_1}.
	\end{equation*}
	From the above arguments, for $W_{\varphi_{\lambda}}[R_{2,b}(D)u(s, \cdot)](x(s), \xi(s))$, there exists a constant $C_{\sigma, a, \varphi} > 0$ such that
	\begin{equation*}
		|W_{\varphi_{\lambda}}[R_{2}(D)u(s, \cdot)](x(s), \xi(s))| \le C_{N, a, \varphi}\lambda^{-(\sigma+\delta_{1})}.
	\end{equation*}
	Finally, we estimate $W_{\varphi_{\lambda}} [R_V u(s, \cdot)](x(s), \xi(s))$ with $R_{V}(y, x) = R_{V,1}(y, x) + R_{V,2}(y, x)$. The strategy of this estimate is entirely based on Kato, Kobayashi Ito \cite{KatoKobayashiIto2017}.
	First, we estimate  $I_{\alpha}(s)$ corresponding to each term of $R_{V,1}$ defined as follows:
	\begin{equation*}
		\begin{aligned}
			I_{\alpha}(s) & := W_{\varphi_{\lambda}}[(\partial_{x}^{\alpha}V)(x(s))(\cdot-x(s))^{\alpha}u(s, \cdot)](x(s), \xi(s))                      \\
			              & = (\partial_{x}^{\alpha}V)(x(s)) \int \overline{\varphi_{\lambda}(y-x(s))} (y-x(s))^{\alpha} u(s, y) e^{-iy\cdot\xi(s)} dy.
		\end{aligned}
	\end{equation*}
	Let $\psi(y) = y^{\alpha}\varphi(y)$.
	Using the relation  that 
    \[
    (y-x(s))^{\alpha}\overline{\varphi_{\lambda}(y-x(s))} = \lambda^{-b|\alpha|}\overline{(y^{\alpha}\varphi)_{\lambda}(y-x(s))}= \lambda^{-b|\alpha|} \psi_\lambda (y-x(s)),
    \] we have that
	\begin{equation*}
		W_{\varphi_{\lambda}}[(\cdot-x(s))^{\alpha}u(s, \cdot)](x(s), \xi(s)) = \lambda^{-b|\alpha|} W_{\psi_{\lambda}}[u(s, \cdot)](x(s), \xi(s)).
	\end{equation*}
	Using the inductive hypothesis, we have $|W_{\varphi_{\lambda}}[(\cdot-x(s))^{\alpha}u(s, \cdot)](x(s), \xi(s))| \le C_{N, a, \psi}\lambda^{-\sigma-b|\alpha|}$.
	From the assumption on the potential, $|(\partial_{x}^{\alpha}V)(x(s))| \le C_{\alpha}\langle x(s)\rangle^{\nu-|\alpha|}$ holds. Combining these estimates with Lemma \ref{lem:estimates}, in the case that $1 < \theta < 2$ and $|\nu| \geq |\alpha|$, 
    we have 
\begin{align*}
|I_{\alpha}(s)|
&\le C_{\alpha}C_{\sigma, a, \psi}\lambda^{-(\sigma+2b)} \\
&\le  C_{N, a, \psi}\lambda^{-\sigma-b|\alpha|+(\theta-1)(\nu-|\alpha|)}\\
& \le  C_{N, a, \psi}\lambda^{-\sigma-2b+(\theta-1)(\nu-2)}.
\end{align*}
	From the assumption on $b$ in Theorem \ref{thm:main}, $-2b+(\theta-1)(\nu-2) < 0$ holds. Thus, the desired decay $|I_{\alpha}(s)|$ for $\lambda$ is obtained.
    In the case that $1 < \theta < 2$ and $|\nu| \leq  |\alpha|$
 from $\langle x(s)\rangle^{\nu-|\alpha|} \le 1$, we can see that
\begin{align*}
|I_{\alpha}(s)| \leq \lambda^{-\sigma-2b}.
\end{align*}
Hence, we have the desired decay $|I_{\alpha}(s)|$ with $1 < \theta < 2$.
	If $0 < \theta \le 1$, by Lemma \ref{lem:estimates}, $|x(s)|$ is bounded with $\lambda$.
	Thus, there exists a constant $C'_{\sigma, a, \varphi} > 0$ such that
	\begin{equation*}
		|I_{\alpha}(s)| \le C'_{\sigma, a, \varphi}\lambda^{-(\sigma+2b)}.
	\end{equation*}
Putting $\delta_{2} := \min(2b, 2b-(\theta-1)(\nu-2))$, we have 
	\begin{equation*}
		|W_{\varphi_{\lambda}}[R_{V,1}u(s, \cdot)](x(s), \xi(s))| \le C_{N, a, \varphi}\lambda^{-(\sigma+\delta_{2})}.
	\end{equation*}
Next, we estimate $R_{V,2}$, which can be written as
\begin{equation*}
W_{\varphi_{\lambda}}[R_{V,2}u(s, \cdot)](x(s), \xi(s))
= \int \overline{\varphi_{\lambda}(y-x(s))} R_{V,2}(y, x(s)) u(s, y) e^{-iy\cdot\xi(s)} \, dy.
\end{equation*}
We decompose $\mathbb{R}^n = B_1 \cup B_2$ with
\begin{equation*}
B_{1} := \{ y \in \mathbb{R}^{n} \mid |y-x(s)| \le \lambda^{-d} \}, \quad
B_{2} := \{ y \in \mathbb{R}^{n} \mid |y-x(s)| > \lambda^{-d} \},
\end{equation*}
where $0 < d < b$, and set
\begin{align*}
J_{1} &:= \int_{B_{1}} \overline{\varphi_{\lambda}(y-x(s))} R_{V,2}(y, x(s)) u(s, y) e^{-iy\cdot\xi(s)} \, dy,\\
J_{2} &:= \int_{B_{2}} \overline{\varphi_{\lambda}(y-x(s))} R_{V,2}(y, x(s)) u(s, y) e^{-iy\cdot\xi(s)} \, dy.
\end{align*}
	First, we estimate $J_1$. From the Cauchy-Schwarz inequality, we have
	\begin{equation*}
		\begin{aligned}
			|J_{1}(s)| & \le \left( \int_{B_{1}} |\overline{\varphi_{\lambda}(y-x(s))} R_{V,2}(y, x(s))|^{2} dy \right)^{\frac{1}{2}} \left( \int_{B_{1}} |u(s, y) e^{-iy\cdot\xi(s)}|^{2} dy \right)^{\frac{1}{2}} \\
			           & \le \|u_{0}\|_{L^2} |B_{1}|^{\frac{1}{2}} \sup_{y \in B_{1}} |\varphi_{\lambda}(y-x(s))| \sup_{y \in B_{1}} |R_{V,2}(y, x(s))|.
		\end{aligned}
	\end{equation*}
	Here, $|B_{1}| = C_{n}\lambda^{-nd}$. For $\varphi_{\lambda}$, we have $\sup_{y \in B_{1}} |\varphi_{\lambda}(y-x(s))| \le C_{\varphi}\lambda^{\frac{nb}{2}}$.
	Next we estimate $R_{V,2}(y, x(s))$. Let $z := x(s)+\tau(y-x(s))$. From the assumption on the potential $V$, $|(\partial_{x}^{\alpha}V)(z)| \le C_{L}\langle z \rangle^{\nu-L}$.
	Since $L$ is large enough such that $\nu \le L$, we have $\langle z \rangle^{\nu-L} \le 1$. Thus, $|R_{V,2}(y, x(s))| \le C_L \lambda^{-Ld}$.
	Combining these estimates, we have for sufficiently large $L$
	\begin{align*}
		|J_1(s)| & \le \|u_0\| C_n \la^{-nd} \sqrt{C_\vp} \la^{nb/2} \left( \sum_{|\alpha|=L}\frac{1}{\alpha!} C_L \la^{-Ld} \right) \\
		         & = C_{L,\vp} \la^{-nd/2 + nb/2 - Ld}                                                                               \\
		         & = C_{\sigma,\vp}\la^{-(\sigma+\delta_1)}.
	\end{align*}
	Next, we estimate the integral over $B_{2}$.
	\begin{equation*}
		|J_{2}(s)| \le \left( \int_{B_{2}} |\overline{\varphi_{\lambda}(y-x(s))} R_{V,2}(y, x(s))|^{2} dy \right)^{\frac{1}{2}} \|u_{0}\|_{L^2}.
	\end{equation*}
Similar to  the estimate for $B_{1}$, we have 
	\begin{equation*}
		\begin{aligned}
			K_{2}(s) & \le C_{L}^{2} \int_{B_{2}} |\varphi_{\lambda}(y-x(s))|^{2} |y-x(s)|^{2L} dy                                \\
			         & = C_{L}^{2} \lambda^{nb-2bL} \int_{B_{2}} |\varphi(\lambda^{b}(y-x(s)))|^{2} |\lambda^{b}(y-x(s))|^{2L} dy \\
			         & = C_{L}^{2} \lambda^{-2bL} \int_{|z| > \lambda^{b-d}} |\varphi(z)|^{2} |z|^{2L} dz                         \\
			         & \le C_{L, \varphi} \lambda^{-2bL}.
		\end{aligned}
	\end{equation*}
	Using this, we obtain
	\begin{equation*}
		|J_{2}(s)| \le \|u_{0}\|_{L^2} \sqrt{C_{L, \varphi}} \lambda^{-bL}.
	\end{equation*}
	Taking $L$ sufficiently large yields the desired decay.

	To summarize, let $\delta := \min\{\delta_{1}, \delta_{2}\}$. Then there exists a constant $C_{\sigma, a, \varphi} > 0$ such that
	\begin{equation*}
		|R_{u}(s, x(s), \xi(s))| \le C_{N, a, \varphi}\lambda^{-(\sigma+\delta)}.
	\end{equation*}
	Thus, we obtain the desired estimate for the remainder term, and the assertion $P(N)$ is proved for all $\sigma$ and $\varphi$ by induction.
	This completes the proof of Theorem \ref{thm:main}.
\end{proof}

\section{Proof of Corollaries}
\subsection{Proof of Corollary \ref{wfs:cor:main_no_potential}}
The proof proceeds in essentially the same way as that of the main theorem. 
	From \eqref{wfs:eq:characteristics_ode_no_potential}, $\tilde{x}(s):=\tilde{x}(s;t,x,\lambda\xi)$ and $\tilde{\xi}(s):=\tilde{\xi}(s;t,x,\lambda\xi)$ are expressed as follows:
	\begin{equation*}
		\begin{cases}
			\tilde{x}(s) = x + (s-t)\theta \lambda^{\theta-1}|\xi|^{\theta-2}\xi, \\
			\tilde{\xi}(s) = \lambda\xi.
		\end{cases}
	\end{equation*}
    Of course, these characteristic curves also satisfy the assertion of Lemma \ref{lem:estimates}. 
Due to this modification of the characteristic curves, the following term appears as an additional remainder:
	\begin{equation*}
		\nabla_x V(\tilde{x}(s)) \int (y-\tilde{x}(s)) \overline{\varphi_{\lambda}(y-\tilde{x}(s))} u(y) e^{-iy\cdot\tilde{\xi}(s)} dy.
	\end{equation*}
All remainder terms, other than this one, are estimated in exactly the same way as in the proof of the main theorem.
We only consider the estimate for this term. As in the previous computations, setting $\psi(y)=y\varphi(y)$, this remainder term can be expressed as
	\begin{equation*}
		\nabla_x V(\tilde{x}(s)) \lambda^{-b} W_{\psi_\lambda}[u](s,\tilde{x}(s),\tilde{\xi}(s)).
	\end{equation*}
Applying  Lemma \ref{lem:estimates} with $1<\theta<2$, from the assumption on $V$ in Corollary \ref{wfs:cor:main_no_potential}, it follows that
	\begin{equation*}
		|\nabla_x V(\tilde{x}(s))| \le C'\lambda^{(\theta-1)(\nu-1)}.
	\end{equation*}
	Also, noting that we can take $b$ such that $(\theta-1)(\nu-1)<b<(2-\theta)/2$  under the assumption that $\nu <\theta/2(\theta-1)$, we have 
	\begin{equation*}
		|\nabla_x V(\tilde{x}(s)) \lambda^{-b} W_{\psi_\lambda}[u](s,\tilde{x}(s),\tilde{\xi}(s))| \le C_{\sigma,a,\varphi} \lambda^{-(\sigma+\delta)},
	\end{equation*}
where $\delta = -b+(\theta-1)(\nu-1)$.
The rest of the proof is identical to that of  Theorem \ref{thm:main}, and this completes the proof of the corollary \ref{wfs:cor:main_no_potential}.

\subsection{Proof of Corollary \ref{wfs:cor:main_time_propagation} with $\theta=1$}
The assertion  of Corollary \ref{wfs:cor:main_no_potential} is also true for the case $\theta=1$.
Moreover, when $\theta=1$, the characteristic curve in the $x$-space is independent of $\lambda$, and therefore Corollary \ref{wfs:cor:main_time_propagation} follows directly from Proposition \ref{prop:wfs_characterization}.

\subsection{Proof of Corollary \ref{wfs:cor:main_time_propagation} with $0< \theta <1$}
It suffices to carry out the same argument as in the proof of Theorem \ref{thm:main} along the  trivial characteristic curves  $(\tilde{x}(s),\tilde{\xi}(s))=(x,\lambda \xi)$.
As a consequence of introducing these characteristic curves, the following additional remainder term arises:
\begin{align*}
\nabla_{\xi}b(\xi) \cdot \lambda^b W_{(\nabla \varphi)_{\lambda}} [u](x, \xi).
\end{align*}
By Lemma \ref{lem:estimates} and the induction hypothesis, we obtain that
\begin{align*}
|\nabla_{\xi}b(\xi) \cdot \lambda^b W_{(\nabla \varphi_{\lambda})} [u](x, \lambda \xi)| \leq C_{N, a, \varphi} \lambda^{-\sigma-1+\theta+b}.  
\end{align*}
Taking $b$  sufficiently small such that $-1 +\theta +b < 0$, we obtain the desired decay for this remainder term.
Therefore, we complete the proof of Corollary \ref{wfs:cor:main_time_propagation}.

\section*{Acknowledgements}
T.K. was supported by the Japan Student Services Organization (JASSO) Scholarship for Graduate Students (First Category).
Y.S. was partially supported by Grants-in-Aid for Scientific Research (C) (No. 23K03169).

\bibliographystyle{amsplain}
\bibliography{reference}

\end{document}